\newcommand{\U}{\mathcal U}
\newcommand{\V}{\mathcal V}
\newcommand{\cf}{\mathrm{cf}}
\newcommand\w{\omega}
\newcommand{\IN}{\mathbb N}
\newcommand{\IZ}{\mathbb Z}
\newcommand{\II}{\mathbb I}
\newcommand{\e}{\varepsilon}
\newcommand{\Ra}{\Rightarrow}
\newcommand{\Law}{\bar\Lambda}
\newtheorem{theorem}{Theorem}
\newtheorem{problem}{Problem}
\newtheorem{lemma}{Lemma}
\newtheorem{corollary}{Corollary}
\newtheorem{proposition}{Proposition}
\newtheorem{claim}{Claim}
\newtheorem{example}{Example}
\theoremstyle{definition}
\newtheorem{definition}{Definition}
\newtheorem{remark}{Remark}
\title{The Lawson number of a semitopological semilattice}
\author{Taras Banakh, Serhii Bardyla, Oleg Gutik}
\address{T.~Banakh: Ivan Franko National University of Lviv (Ukraine) and Jan Kochanowski University in Kielce (Poland)}
\email{t.o.banakh@gmail.com}
\address{S.~Bardyla: Institute of Mathematics, Kurt G\"{o}del Research Center, Vienna (Austria)}
\email{sbardyla@yahoo.com}
\address{O.~Gutik: Ivan Franko National University of Lviv (Ukraine)}
\email{ogutik@gmail.com}
\subjclass{06B30, 54D10}
\keywords{semitopological semilattice, complete topologized semilattice, the Lawson number, $\w$-Lawson topologized semilattice}
\begin{document}

\begin{abstract} For a Hausdorff topologized semilattice $X$ its {\em Lawson number $\Law(X)$} is the smallest cardinal $\kappa$ such that for any distinct points $x,y\in X$ there exists a family $\mathcal U$ of closed neighborhoods of $x$ in $X$ such that $|\U|\le\kappa$ and $\bigcap\U$ is a subsemilattice of $X$ that does not contain $y$. It follows that $\Law(X)\le\bar\psi(X)$, where $\bar\psi(X)$ is the smallest cardinal $\kappa$ such that for any point $x\in X$ there exists a family $\mathcal U$ of closed neighborhoods of $x$ in $X$ such that $|\U|\le\kappa$ and $\bigcap\U=\{x\}$.

We prove that a compact Hausdorff semitopological semilattice $X$ is Lawson (i.e., has a base of the topology consisting of subsemilattices) if and only if $\Law(X)=1$. Each Hausdorff topological semilattice $X$ has Lawson number  $\Law(X)\le\w$. On the other hand, for any  infinite cardinal $\lambda$ we construct a Hausdorff zero-dimensional semitopological semilattice $X$ such that $|X|=\lambda$ and $\Law(X)=\bar\psi(X)=\cf(\lambda)$.

A topologized semilattice $X$ is called (i) {\em $\w$-Lawson} if $\Law(X)\le\w$; (ii) {\em complete} if each non-empty chain $C\subset X$ has $\inf C\in\overline{C}$ and $\sup C\in\overline{C}$. We prove that for any complete subsemilattice $X$ of an $\w$-Lawson semitopological semilattice $Y$, the partial order $\le_X=\{(x,y)\in X\times X:xy=x\}$ of $X$ is closed in $Y\times Y$ and hence $X$ is closed in $Y$. This implies that for any continuous homomorphism $h:X\to Y$ from a compete topologized semilattice  $X$ to an $\w$-Lawson semitopological semilattice $Y$ the image $h(X)$ is closed in $Y$.
\end{abstract}
\maketitle

\section*{Introduction}

In this paper we introduce a new cardinal invariant $\Law(X)$ of a Hausdorff topologized semilattice $X$, called the Lawson number of $X$. Introducing of the Lawson number was motivated by studying the closedness properties of complete topologized semilattices.
Complete topologized semilattices were studied by the first two authors in \cite{BBm}, \cite{BBw},  \cite{BBc}, \cite{BBseq}, \cite{BBo}, \cite{BBR}. It turns out that complete semitopological semilattices share many common properties with compact topological semilattices, in particular their continuous homomorphic images in Hausdorff topological semilattices are closed.

A {\em semilattice} is any commutative semigroup of idempotents (an element $x$ of a semigroup is called an {\em idempotent} if $xx=x$). 

Each semilattice $X$ carries the natural partial order $\le_X$ defined by $x\le y$ iff $xy=x=yx$. Many properties of semilattices are defined in the language of the natural partial order. In particular, for a point $x\in X$ we can consider its upper and lower sets $${\uparrow}x:=\{y\in X:xy=x\}\mbox{ \ and \ }{\downarrow}x:=\{y\in X:xy=y\}$$ in the partially ordered set $(X,\le_X)$.


A subset $C$ of a semilattice $X$ is called a {\em chain} if $xy\in\{x,y\}$ for any $x,y\in C$. A semilattice $X$ is called {\em chain-finite} if each chain in $X$ is finite.
A semilattice is called {\em linear} if it is a chain in itself.

A semilattice endowed with a topology is called a {\em topologized semilattice}. 
A topologized semilattice $X$ is called a ({\em semi\,}){\em topological semilattice} if the semigroup operation $X\times  X\to X$, $(x,y)\mapsto xy$, is (separately) continuous. 

In \cite{Stepp1975} Stepp proved that for any homomorphism $h:X\to Y$ from a chain-finite semilattice to a Hausdorff topological semilattice $Y$, the image $h(X)$ is closed in $Y$. In \cite{BBm}, the first authors improved this result of Stepp proving the following theorem.

\begin{theorem}[Banakh, Bardyla]\label{t:cf}  For any homomorphism $h:X\to Y$ from a chain-finite semilattice to a  Hausdorff semitopological semilattice $Y$, the image $h(X)$ is closed in $Y$.
\end{theorem}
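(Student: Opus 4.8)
The plan is to reduce to a statement about dense subsemilattices and then argue by contradiction, using chain-finiteness to manufacture minimal elements and the separate continuity of the multiplication in $Y$ to identify them with the limit point.

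First I would make two reductions. Since $z\mapsto zw$ is continuous for each fixed $w\in Y$, the closure of a subsemilattice of a Hausdorff semitopological semilattice is again a subsemilattice; hence we may replace $Y$ by $\overline{h(X)}$ and assume that $S:=h(X)$ is dense in $Y$. Next, $S$ is chain-finite, being a homomorphic image of the chain-finite semilattice $X$: a strictly decreasing chain $(y_n)$ in $S$ lifts, via the partial meets $a_0a_1\cdots a_n$ of chosen preimages $a_n\in h^{-1}(y_n)$, to an infinite strictly decreasing chain in $X$, while a strictly increasing chain $(y_n)$ in $S$ gives an infinite strictly increasing chain $(e_k)$ in $X$ with $e_k:=a_ka_{k+1}\cdots a_{N_k}$, where $N_k$ is the stage at which the (decreasing, hence eventually constant) sequence $(a_ka_{k+1}\cdots a_n)_{n\ge k}$ stabilizes, and one checks $h(e_k)=y_k$ and $e_j\le_X e_k$ for $j\le k$. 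Thus it suffices to show: \emph{a dense chain-finite subsemilattice $S$ of a Hausdorff semitopological semilattice $Y$ equals $Y$.}

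So suppose, for a contradiction, that $\bar y\in Y\setminus S$, and fix a net $(s_\alpha)$ in $S$ with $s_\alpha\to\bar y$. The only topological input is separate continuity: for fixed $s\in S$ we get $s_\alpha s\to\bar ys$ and -- crucially -- $s_\alpha s\in S$. I would first reduce, using this (namely: if $\bar y\le_Y s$ for some $s\in S$, then $s_\alpha s\to\bar y$ with $s_\alpha s\in S\cap{\downarrow}_Y s$, so $\bar y\in\overline{S\cap{\downarrow}_Y s}$) together with replacing $S$ by $\bar yS=\{\bar ys:s\in S\}$ (still chain-finite, still with $\bar y$ in its closure), to the case where every element of $S$ lies $\le_Y\bar y$. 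In that case an iterated-limit argument -- converge a net in $S\cap{\downarrow}_Y s_1$ to $\bar y$, then multiply termwise by a net in $S\cap{\downarrow}_Y s_2$ converging to $\bar y$, using $s_i\le_Y\bar y$ to keep the products heading toward $\bar y$ -- shows that the family $Q$ of all $s\in S$ with $\bar y\in\overline{S\cap{\downarrow}_Y s}$ is closed under the semilattice operation, hence down-directed. Since $S$ is chain-finite, a nonempty down-directed subset of $S$ has a $\le_Y$-minimum, so (once $Q\neq\emptyset$ is established) we may set $m:=\min Q$. Applying separate continuity to a net in $S\cap{\downarrow}_Y m$ converging to $\bar y$ yields $m\bar y=\bar y$, i.e.\ $\bar y\le_Y m$; since also $m\le_Y\bar y$, we conclude $\bar y=m\in S$ -- the desired contradiction.

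The step I expect to be the main obstacle is verifying $Q\neq\emptyset$ -- equivalently, showing that the down-directed bookkeeping actually closes up and forces the minimal ``good'' element to be $\bar y$ itself. This is exactly the point where the Hausdorff hypothesis is indispensable: without it there are non-closed chain-finite subsemilattices (e.g.\ a ``fan'' -- an infinite antichain with a common lower bound -- whose antichain has a limit point lying outside it), so any proof of $Q\neq\emptyset$ must combine chain-finiteness, the density of $S$, and the continuity of each translation $z\mapsto zs$ in an essential way. It is also the reason the standard compact-topological-semilattice arguments do not transfer verbatim: only separate, not joint, continuity of the multiplication is available, which is what forces the iterated-limit manipulations above and the care needed to keep all approximating nets inside $S$ rather than merely inside $\overline{S}$.
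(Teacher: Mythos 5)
First, note that the paper does not prove Theorem~\ref{t:cf}: it is quoted as a known result from \cite{BBm}, so there is no in-paper argument to compare against; your attempt has to stand on its own.

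Your preliminary reductions are correct and standard: the closure of a subsemilattice of a semitopological semilattice is a subsemilattice (apply continuity of the two translations $z\mapsto zs$ and $z\mapsto az$ in succession), the homomorphic image $h(X)$ is chain-finite (your lifting of monotone sequences via partial meets works), and the down-directed-set-has-a-minimum and limit-uniqueness steps at the end are fine. The problem is that the entire content of the theorem is concentrated in the one step you flag as ``the main obstacle'' and do not prove, namely $Q\neq\emptyset$; and in fact, in your reduced setting this step is not merely the hardest step but is \emph{equivalent} to the conclusion, so the surrounding machinery ($Q$ is a subsemilattice, hence down-directed, hence has a minimum $m$, and $m=\bar y$) does no work. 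Concretely: once every element of $S$ is $\le_Y\bar y$, any single $s\in Q$ already yields $\bar y\in S$ without invoking $\min Q$ --- a net $t_\delta\in S\cap{\downarrow}_Y s$ with $t_\delta\to\bar y$ satisfies $t_\delta=t_\delta s\to\bar y s$, whence $\bar y s=\bar y$ by Hausdorffness, and combined with $s\le_Y\bar y$ this gives $\bar y=s\in S$. Conversely, in your own ``fan'' example (an infinite antichain $\{a_i\}$ with a common zero), every set $S\cap{\downarrow}_Y a_i=\{0,a_i\}$ is finite and hence closed, so $Q\neq\emptyset$ holds if and only if $\bar y\in S$. Thus your framework does not isolate a more tractable sub-problem.

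Two further structural points. (1) The reduction ``replace $S$ by $\bar yS$'' is lossy: $\bar yS$ lives in $\overline{S}$, not in $S$, so even a successful proof that $\bar y\in\bar yS$ only produces some $s\in S$ with $\bar y\le_Y s$, not $\bar y\in S$; an additional (nontrivial) descent argument is then needed to get from $S\cap{\uparrow}_Y\bar y\neq\emptyset$ down to $\bar y\in S$. (2) The iterated-limit idea you sketch is indeed the right kind of tool --- e.g.\ for the fan it gives $\bar ya_j=\lim_i a_ia_j=0$ and then $\bar y=\lim_j\bar ya_j=0\in S$ --- but making it work for a general chain-finite $S$ is precisely the substantive induction carried out in \cite{BBm}, and nothing in your proposal supplies it. As written, the argument is an incomplete reduction rather than a proof.
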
 

Topological generalizations of the notion of chain-finiteness are the notions of  chain-compactness and completeness, discussed in \cite{BBo}.

A topologized semilattice $X$ is called
\begin{itemize}
\item {\em chain-compact} if each closed chain in $X$ is compact;
\item {\em complete} if each non-empty  chain $C\subset X$ has $\inf C\in\overline{ C}$ and $\sup C\in\overline{C}$.
\end{itemize}
Here $\overline{C}$ stands for the closure of $C$ in $X$.  Chain-compact and complete topologized semilattices appeared to be very helpful in studying the closedness properties of topologized semilattices, see \cite{BBm}, \cite{BBw}, \cite{BBc},  \cite{BBseq}, \cite{BBo}, \cite{BBR}, \cite{GutikRepovs2008}. By Theorem 3.1 \cite{BBm}, a Hausdorff semitopological semilattice is chain-compact if and only if it is complete (see also Theorem 4.3 \cite{BBo} for generalization of this characterization to topologized posets).
In \cite{BBm} the first two authors proved the following closedness property of complete topologized semilattices.

\begin{theorem}[Banakh, Bardyla]\label{t:kc} For any continuous homomorphism $h:X\to Y$ from a complete topologized semilattice $X$ to a Hausdorff topological semilattice $Y$, the image $h(X)$ is closed in $Y$.
\end{theorem}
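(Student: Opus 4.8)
The plan is to argue by contradiction, along the lines of the earlier closedness theorems. First I would reduce to the case of a dense image: since $Y$ is a Hausdorff topological semilattice, so is the closure $\overline{h(X)}$ (the closure of a subsemilattice of a semitopological semilattice is a subsemilattice), and $h(X)$ is closed in $Y$ exactly when $h(X)=\overline{h(X)}$; so after replacing $Y$ by $\overline{h(X)}$ I may assume $h(X)$ dense in $Y$ and try to prove $h(X)=Y$. Assuming this fails, fix $z\in Y\setminus h(X)$. I would then record the tools: in the Hausdorff semitopological semilattice $Y$ all sets ${\uparrow}y=\{w:wy=y\}$ and ${\downarrow}y=\{w:wy=w\}$ are closed and the closure of every chain is a chain; since $h$ is a continuous homomorphism, $h^{-1}(T)$ is a closed subsemilattice of $X$ for every closed subsemilattice $T\subseteq Y$, and it is again complete, because a closed subsemilattice of a complete topologized semilattice is complete; and every complete topologized semilattice has a least element (take a maximal chain $M$, note $\inf_X M\in\overline M\subseteq X$ and, by maximality, $\inf_X M\in M$, and check it lies below every element).

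The heart of the proof would be a minimality construction. Let $\mathcal S$ be the family of all nonempty closed subsemilattices $S\subseteq X$ with $z\in\overline{h(S)}$, ordered by reverse inclusion; it contains $X$. To apply Zorn's Lemma I must show that the intersection $S:=\bigcap_{i\in I}S_i$ of any chain $\{S_i\}_{i\in I}\subseteq\mathcal S$ again lies in $\mathcal S$. That $S$ is a closed subsemilattice is clear. For $S\ne\emptyset$ one already needs completeness: the least elements $m_i=\min S_i$ form an increasing chain in $X$, so $m:=\sup_X\{m_i:i\in I\}$ exists and, being the supremum of the cofinal subchain $\{m_j:j\ge i\}$ of $S_i$, lies in $\overline{S_i}=S_i$ for each $i$; hence $m\in S$, and indeed $m=\min S$. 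The delicate remaining point, $z\in\overline{h(S)}$, is the step I expect to be the main obstacle: it is a compactness-type statement, and this is where the completeness of $X$ must be used most essentially — presumably by a further infimum/supremum-of-chains construction performed inside the $S_i$ near the traces of the neighbourhoods of $z$, or by first cutting $\mathcal S$ down to subsemilattices of a form preserved under such intersections. Granting it, Zorn's Lemma yields a minimal $S_0\in\mathcal S$.

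Finally I would extract a contradiction from the minimality of $S_0$. The first sub-step is to show $\overline{h(S_0)}$ is a chain: otherwise, for incomparable $u,v\in\overline{h(S_0)}$, at least one of the proper closed subsemilattices $S_0\cap h^{-1}({\downarrow}_Y u)$ and $S_0\cap h^{-1}({\downarrow}_Y v)$ of $X$ would still contain $z$ in the closure of its $h$-image (using that $z$ is comparable to each of $u$, $v$, $uv$), contradicting minimality. So $\overline{h(S_0)}$ is a chain, and since $z\notin h(X)\supseteq h(S_0)$ every element of $h(S_0)$ is comparable to and distinct from $z$; thus $h(S_0)$ splits into the part $L=h(S_0\cap h^{-1}({\downarrow}_Y z))$ below $z$ (nonempty, since $h(\min S_0)\le z$) and the part $U=h(S_0\cap h^{-1}({\uparrow}_Y z))$ above $z$, and $z\in\overline{h(S_0)}=\overline L\cup\overline U$. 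If $z\in\overline L$ then $S_0\cap h^{-1}({\downarrow}_Y z)\in\mathcal S$ is contained in $S_0$, hence equals $S_0$ by minimality, so $h(S_0)\subseteq{\downarrow}_Y z$; a second use of minimality (applied to the sets $S_0\cap h^{-1}({\downarrow}_Y h(x))$, $x\in S_0$) then forces $h(S_0)$ to have a greatest element $h(s^*)$, $s^*\in S_0$, and $z\le h(s^*)$ because $z\in\overline{h(S_0)}\subseteq{\downarrow}_Y h(s^*)$, so $z=h(s^*)\in h(X)$. If instead $z\in\overline U$, the symmetric argument gives $z=h(\min S_0)\in h(X)$. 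Either way this contradicts $z\notin h(X)$, proving $h(X)=Y$ closed. Besides the Zorn step, the point in this last paragraph that I expect to require the most care is the existence of the greatest element of $h(S_0)$, which again rests on the completeness of $X$.
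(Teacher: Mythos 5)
Your argument hinges on one step that you explicitly leave unproved, and that step is the entire mathematical content of the theorem. To apply Zorn's Lemma to $\mathcal S$ you must show that for a chain $\{S_i\}_{i\in I}\subset\mathcal S$ the intersection $S=\bigcap_{i\in I}S_i$ still satisfies $z\in\overline{h(S)}$; you write ``granting it'' and move on. This is not a technicality: it is exactly where the two ideas that prove the theorem have to appear. First, one uses the \emph{joint} continuity of the operation of $Y$ at $(z,z)$ to choose, inside a prescribed neighbourhood $V$ of $z$, open sets $V_n\ni z$ with $V_nV_n\subset V_{n-1}$, so that $T:=\bigcap_{n\in\w}\overline{V_n}\subset V$ is a \emph{closed subsemilattice} of $Y$ containing $z$ (this is Proposition~\ref{p:Gdelta-Lawson}, i.e.\ the $\w$-Lawson property of Hausdorff topological semilattices, and it is the only place where ``topological'' rather than ``semitopological'' enters). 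Second, completeness of $X$ is used through the $\inf$-then-$\sup$ manoeuvre of Lemma~\ref{l1}: picking $x_n\in S_i\cap h^{-1}(V_n)$, the products $x_n\cdots x_m$ lie in $S_i\cap h^{-1}(V_{n-1})$, their infima over $m$ lie in $S_i\cap h^{-1}(\overline{V_{n-1}})$, and the supremum of these infima lies in the nonempty closed subsemilattice $S_i\cap h^{-1}(T)$; the minima $m_i$ of these subsemilattices form a chain whose supremum lies in $S\cap h^{-1}(T)\subset S\cap h^{-1}(V)$. With this inserted your Zorn step does close, but as written the proposal contains neither of these constructions, so it is not a proof.

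There are also unjustified assertions in the endgame. To show $\overline{h(S_0)}$ is a chain you invoke ``$z$ is comparable to each of $u$, $v$, $uv$,'' which is not known a priori (that $z$ is comparable to points of $\overline{h(S_0)}$ is something you would only get \emph{after} knowing $\overline{h(S_0)}$ is a chain), and the passage from incomparable $u,v$ to $z\in\overline{h(S_0)\cap{\downarrow}u}\cup\overline{h(S_0)\cap{\downarrow}v}$ needs an argument. Likewise the ``greatest element of $h(S_0)$'' step is only gestured at: $h(S_0)$ need not be closed, and completeness does not transfer to $h(S_0)$ for free (the paper needs ${\uparrow}$-closedness of $Y$ and Lemma~\ref{l:hom} for this). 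Finally, note that the present paper does not prove Theorem~\ref{t:kc} by a minimal-subsemilattice argument at all: the theorem is quoted from \cite{BBm}, and within this paper it follows from Proposition~\ref{p:Gdelta-Lawson} together with Corollary~\ref{c:main}, whose proof (Lemmas~\ref{l1}--\ref{l4}) is a direct countable induction with no appeal to Zorn's Lemma.
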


Theorems~\ref{t:cf} and \ref{t:kc} motivate the following (still) open problem.

\begin{problem}\label{prob:main} Assume that $h:X\to Y$ is a continuous homomorphism from a complete topologized semilattice $X$ to a Hausdorff semitopological semilattice $Y$. Is $h(X)$ closed in $Y$?
\end{problem}

In \cite{BBseq} the first two authors gave the following partial answer to Problem~\ref{prob:main}.

\begin{theorem}[Banakh, Bardyla] For any continuous homomorphism $h:X\to Y$ from a complete topologized semilattice $X$ to a sequential Hausdorff semitopological semilattice $Y$, the image $h(X)$ is closed in $Y$.
\end{theorem}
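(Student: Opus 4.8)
The plan is to reduce the closedness of $h(X)$ to its \emph{sequential} closedness (using that $Y$ is sequential) and then to use the completeness of $X$ to produce, for any convergent sequence in $h(X)$, a preimage of its limit. Since in a sequential space closedness coincides with sequential closedness, it suffices to take a sequence $(y_n)_{n\in\w}$ in $h(X)$ converging to some point $y\in Y$ and to find $a\in X$ with $h(a)=y$. Fix $x_n\in X$ with $h(x_n)=y_n$. Two elementary facts will be used throughout: (a) the set $K:=\{y_n:n\in\w\}\cup\{y\}$ is compact, being a continuous image of the compact ordinal $\w+1$, hence closed in the Hausdorff space $Y$; and (b) for every $z\in Y$ the sets ${\uparrow}z=\{w:wz=z\}$ and ${\downarrow}z=\{w:wz=w\}$ are closed in $Y$, being equalizers of pairs of continuous maps into the Hausdorff space $Y$.

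The candidate $a$ is the ``lower limit'' of the lifted sequence, assembled out of chains so that the completeness of $X$ can be invoked. For each $n$ the partial products $q_{n,k}:=x_nx_{n+1}\cdots x_k$, $k\ge n$, form a decreasing chain in $X$; by completeness its infimum $a_n$ exists in $X$ and lies in $\overline{\{q_{n,k}:k\ge n\}}$, and clearly $a_n\le_X x_k$ for all $k\ge n$. The sequence $(a_n)_{n\in\w}$ is an increasing chain, so, again by completeness, $a:=\sup_{n\in\w}a_n$ exists in $X$ and lies in $\overline{\{a_n:n\in\w\}}$. One half of the equality $h(a)=y$ is now easy: applying the order-preserving map $h$ to $a_n\le x_k$ gives $h(a_n)\le y_k$ for $k\ge n$, whence $h(a_n)\cdot y_k=h(a_n)$; letting $k\to\infty$ and using continuity of $w\mapsto h(a_n)w$ together with the Hausdorffness of $Y$ yields $h(a_n)\cdot y=h(a_n)$, i.e.\ $h(a_n)\in{\downarrow}y$; since ${\downarrow}y$ is closed and $h(a)\in\overline{\{h(a_n):n\in\w\}}$ by continuity of $h$, we conclude $h(a)\le_Y y$.

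The main obstacle is the reverse inequality $y\le_Y h(a)$: one must show that the continuous homomorphism $h$ sends the lower limit $a$ of $(x_n)$ to the genuine limit $y$ of $(y_n)$, and not merely to some strictly smaller element. This is precisely where the separate (rather than joint) continuity of the multiplication on $Y$ has to be compensated: in the topological case (Theorem~\ref{t:kc}) joint continuity makes such lower-limit computations routine. Here I would instead try to show that $h(X)$ is already sequentially closed in every Hausdorff semitopological semilattice (using sequentiality of $Y$ again as needed to extract convergent subsequences inside $Y$), by proving that $h(a_n)$ is the limit of the decreasing sequence $\big(y_ny_{n+1}\cdots y_k\big)_{k\ge n}$, that $h(a)$ is the limit of the increasing sequence $\big(h(a_n)\big)_{n\in\w}$, and finally that $\sup_{n\in\w}h(a_n)=y$, the last step resting on $y_n\to y$ and on fact (a). Controlling this tower of nested monotone sequences under only separately continuous operations — in particular ruling out that the images of these chains accumulate at a point strictly below $y$ — is the technical heart of the proof and the step I expect to be hardest.
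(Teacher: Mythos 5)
Your proposal stops exactly where the proof has to begin, and the gap is not merely technical: with the candidate $a$ as you construct it, the desired conclusion $h(a)=y$ is simply false in general. Since $h$ is a homomorphism, $h(a_n)$ is a lower bound of the set $\{y_ny_{n+1}\cdots y_k:k\ge n\}$, and in a semitopological semilattice a sequence $(y_n)$ converging to $y$ can have partial products $y_n\cdots y_k$ that collapse to elements far below $y$ (nothing forces the product of two points near $y$ to be near $y$ when the multiplication is only separately continuous). Hence $h(a)=\sup_n h(a_n)$ can be strictly smaller than $y$, and no amount of work on ``the reverse inequality'' will repair this: the defect is in the choice of the lifts $x_n$, which you take to be arbitrary preimages of the \emph{given} sequence $(y_n)$. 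Compare with Lemma~\ref{l1} of the present paper: there one does not lift a prescribed sequence, but instead uses the separate continuity of the shifts to build nested neighborhoods $U_n'$ of the limit point and then picks points of $X$ whose images lie in $U_n'$, arranged so that \emph{all} partial products $x_i\cdots x_n$ and $x_i\cdots x_n x$ stay inside prescribed closed neighborhoods $U_i$; only then do the infima of the chains $C_i$ and their supremum land in $\bigcap_i U_i$. Your construction omits precisely this control.

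Even after that repair, the argument is not finished: the controlled construction only produces a point of $h(X)$ inside $\bigcap_{n}U_n$ for any countable family of closed neighborhoods $U_n$ of $y$, and one still needs a separation-type mechanism to conclude that such intersections pin down $y$ itself. In this paper that mechanism is the $\w$-Lawson hypothesis (Lemma~\ref{l2}); in the sequential setting of \cite{BBseq} the sequentiality of $Y$ is exploited in an essentially stronger way than your reduction ``closed $=$ sequentially closed'' — roughly, through an analysis of (iterated) sequential closures of $h(X)$ — rather than by a single passage to the limit of one convergent sequence. So the heart of the theorem is exactly the part you flag as ``hardest'' and leave unproved, and the scaffolding you set up around it would have to be rebuilt before that heart could be supplied.
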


Another partial result to Problem~\ref{prob:main} was given in \cite{BBR}.

\begin{theorem}[Banakh, Bardyla, Ravsky]\label{BBR1} For any continuous homomorphism $h:X\to Y$ from a complete topologized semilattice $X$ to a functionally Hausdorff semitopological semilattice $Y$, the image $h(X)$ is closed in $Y$.
\end{theorem}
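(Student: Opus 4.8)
The plan is to argue by contradiction. Suppose $h(X)$ is not closed in $Y$, fix a point $y\in\overline{h(X)}\setminus h(X)$, and choose a net $(x_\alpha)$ in $X$ with $h(x_\alpha)\to y$. As a harmless preliminary I would replace $Y$ by $\overline{h(X)}$: this is again a subsemilattice of $Y$ (for $a\in h(X)$ the continuity of $z\mapsto az$ gives $a\cdot\overline{h(X)}\subseteq\overline{h(X)}$, and then the continuity of $z\mapsto zb$ for $b\in\overline{h(X)}$ gives $\overline{h(X)}\cdot\overline{h(X)}\subseteq\overline{h(X)}$), it remains functionally Hausdorff as a subspace, and $h$ remains a continuous homomorphism; so we may assume $h(X)$ dense in $Y$.

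The heart of the argument is the following reduction: \emph{it suffices to produce one chain $C\subseteq X$ with $y\in\overline{h(C)}$.} Suppose such a $C$ is at hand. Then $h(C)$ is a chain in $Y$, and for each $c\in C$ the set $({\downarrow}h(c))\cup({\uparrow}h(c))=\{z\in Y:zh(c)=z\}\cup\{z\in Y:zh(c)=h(c)\}$ is closed in $Y$, being a union of two equalizers of continuous self-maps of the Hausdorff space $Y$; since $h(C)$ is a chain, $h(C)\subseteq({\downarrow}h(c))\cup({\uparrow}h(c))$, hence $y\in\overline{h(C)}\subseteq({\downarrow}h(c))\cup({\uparrow}h(c))$. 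Therefore $C=C_{\le}\cup C_{\ge}$ with $C_{\le}:=\{c\in C:h(c)\le y\}$ and $C_{\ge}:=\{c\in C:h(c)\ge y\}$, where $C_{\le}\cap C_{\ge}=\emptyset$ since $h(c)\ne y$ for all $c$. Now $y\in\overline{h(C)}=\overline{h(C_{\le})}\cup\overline{h(C_{\ge})}$, so $y$ lies in one of these closures; by symmetry assume $y\in\overline{h(C_{\le})}$ (otherwise replace $\sup$ by $\inf$ below). By completeness of $X$ the chain $C_{\le}$ has $s:=\sup C_{\le}\in\overline{C_{\le}}\subseteq X$. As homomorphisms are order preserving, $h(s)$ is an upper bound of $h(C_{\le})$, so $h(C_{\le})\subseteq{\downarrow}h(s)$ (a closed set) and thus $y\in\overline{h(C_{\le})}\subseteq{\downarrow}h(s)$, that is $y\le h(s)$. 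On the other hand $h(C_{\le})\subseteq{\downarrow}y$ (a closed set), so $h(s)\in h(\overline{C_{\le}})\subseteq\overline{h(C_{\le})}\subseteq{\downarrow}y$, that is $h(s)\le y$. Hence $h(s)=y\in h(X)$, contradicting the choice of $y$. Note that this reduction uses nothing about $Y$ beyond being a Hausdorff semitopological semilattice, and nothing about $X$ beyond completeness.

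The remaining step — producing a chain $C\subseteq X$ with $y\in\overline{h(C)}$ — is where functional Hausdorffness of $Y$ must be used, and I expect it to be the main obstacle. The plan here is a transfinite recursion (equivalently, a Zorn maximality argument): starting from the tail filter of $(x_\alpha)$, whose $h$-image converges to $y$, one recursively passes from the current piece of $X$ to a smaller one — a lower or an upper portion of it — keeping the accumulated data a chain of $X$ while ensuring that its $h$-image still clusters at $y$. The delicate point is the limit stages, and there the abundance of continuous real functions on $Y$ (which separate its points, $Y$ being functionally Hausdorff) is what should certify that clustering at $y$ survives passage to the intersection of the previously chosen pieces. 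That a separation hypothesis on $Y$ is genuinely needed here is witnessed by this paper's zero-dimensional Hausdorff semitopological semilattices with $\Law(X)=\bar\psi(X)=\cf(\lambda)>\w$. I would also consider, and expect to discard, the shortcut of embedding $Y$ into a Hausdorff topological semilattice so as to invoke Theorem~\ref{t:kc}: continuous homomorphisms of $Y$ into Hausdorff topological semilattices need not separate the points of $Y$, so no such embedding need exist, and the concrete construction of the chain appears unavoidable.
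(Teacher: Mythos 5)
Your second paragraph is a correct and rather clean reduction: if a chain $C\subseteq X$ with $y\in\overline{h(C)}$ exists, then the comparability of $y$ with each $h(c)$ (via closedness of $({\downarrow}h(c))\cup({\uparrow}h(c))$ in a Hausdorff semitopological semilattice), the splitting $C=C_{\le}\cup C_{\ge}$, and completeness of $X$ do force $y\in h(X)$. But the theorem is not proved, because the step you defer to the last paragraph --- producing such a chain --- is the entire content of the result, and what you offer there is not an argument: ``a transfinite recursion passing to lower or upper portions,'' with the limit stages to be handled because continuous real functions ``should certify that clustering at $y$ survives,'' names no mechanism for choosing the next point, no invariant to be preserved, and no reason the recursion terminates with $y$ still in the closure. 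It is not even clear a priori that a single chain clustering at $y$ exists; establishing that is exactly as hard as the theorem itself.

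For comparison, the actual route (in the cited paper, and mirrored in Section~\ref{s:main} of this paper for the $\w$-Lawson case) does not hunt for one chain clustering at $y$. It first transfers completeness from $X$ to $h(X)$ via Lemma~\ref{l:hom}, reducing to the closedness of a complete subsemilattice $Z=h(X)$ of $Y$; then it proves that the partial order $\le_Z$ is closed in $Y\times Y$ and pulls $Z$ back as $f^{-1}(\le_Z)$ along $f:y\mapsto(\min Z,y)$. The analytic core is a lemma of the type of Lemma~\ref{l1}: an inductive construction of pairs $x_n\le y_n$ in $Z$ whose iterated products $x_i\cdots x_n$ stay inside prescribed closed neighborhoods $U_i^\circ$ (this is where separate continuity is used, one shift at a time), followed by taking $\inf$s of the resulting chains and then $\sup$s of those $\inf$s via completeness. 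The separation hypothesis (functional Hausdorffness there, the $\w$-Lawson property here) enters only at the very end, to certify that the point so constructed actually equals the limit point one started with. Your plan would have to reinvent essentially all of this machinery inside the ``limit stages''; as written, the proposal establishes a useful reduction but leaves the main step unproved.
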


Let us recall that a topological space $X$ is {\em functionally Hausdorff} if for any distinct points $x,y\in X$ there exists a continuous map $f:X\to\mathbb R$ such that $f(x)\ne f(y)$.

In fact, in \cite{BBR} Theorem~\ref{BBR1} was derived from the following closedness property of the partial order of a complete subsemilattice of a functionally Hausdorff semitopological semilattice.

\begin{theorem}[Banakh, Bardyla, Ravsky]\label{BBR2} For any complete subsemilattice $X$ of a functionally Hausdorff semitopological semilattice $Y$, the partial order $\le_X$ of $X$ is a closed subset of $Y\times Y$.
\end{theorem}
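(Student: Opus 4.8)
The plan is to deduce the theorem from the purely topological assertion that \emph{a complete subsemilattice $Z$ of a functionally Hausdorff semitopological semilattice $W$ is closed in $W$}. This reduction is routine: $Y\times Y$ with the coordinatewise operation is again a functionally Hausdorff semitopological semilattice, $\le_X$ is a subsemilattice of $X\times X\subseteq Y\times Y$, and $\le_X$ is itself complete. Indeed, for a chain $C\subseteq\le_X$ the projections $C_1,C_2\subseteq X$ are chains, so $\inf C_i$ and $\sup C_i$ exist in $X$; since $x\le y$ for every $(x,y)\in C$, one gets $\inf C_1\le\inf C_2$ and $\sup C_1\le\sup C_2$, so $(\inf C_1,\inf C_2)$ and $(\sup C_1,\sup C_2)$ lie in $\le_X$ and are the infimum and supremum of $C$ computed in $X\times X$; and they belong to $\overline C$ (closure in $X\times X$) because $X\times X$ is chain-compact — a closed chain in $X\times X$ has chain projections into $X$ whose closures are compact (here I use that the complete space $X$ is chain-compact, by Theorem~3.1 of \cite{BBm}), so it lies in a product of two compacta and is compact — hence complete by the same Theorem~3.1. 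Applying the reduced statement to $Z=\le_X$ and $W=Y\times Y$ gives the theorem; applying it to $Z=X$ shows, for free, that $X$ is closed in $Y$.

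To prove that a complete subsemilattice $Z$ of a functionally Hausdorff semitopological semilattice $W$ is closed, I would use chain-compactness of $Z$ twice. First, every closed chain of $Z$ is compact, hence closed in the Hausdorff space $W$. It therefore suffices to prove:
\begin{itemize}
\item[$(\ast)$] every point $a\in\overline Z$ (closure in $W$) lies in $\overline C$ for some chain $C\subseteq Z$.
\end{itemize}
Granting $(\ast)$, the set $\overline C\cap Z$ is a closed chain of $Z$ (the closure of a chain in a Hausdorff semitopological semilattice is again a chain), hence a compact, hence closed, subset of $W$ that contains $C$, so $a\in\overline C\cap Z\subseteq Z$. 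The heart of the matter is $(\ast)$, and this is the only place where functional Hausdorffness of $W$, rather than mere Hausdorffness, is used: for a separately continuous operation the identity $z_\alpha=z_\alpha z_\alpha$ gives no monotone net converging to $a$, so one cannot argue as in the jointly continuous case of Theorem~\ref{t:kc}, and the $\IR$-valued test functions supplied by functional Hausdorffness are what must bridge this gap.

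For $(\ast)$ I would first isolate a \emph{directed-infimum lemma}: in a complete semitopological semilattice, every nonempty downward-directed subsemilattice $D$ has $\inf D\in\overline D$. This follows from a short Zorn argument — replace $D$ by its closure, which is again a downward-directed subsemilattice; take a maximal chain $C$ in $D$; by completeness $\inf C\in D$; and if $d\cdot\inf C<\inf C$ for some $d\in D$, then $C\cup\{d\cdot\inf C\}$ is a strictly larger chain in $D$. Then, given a net $(z_\alpha)$ in $Z$ converging to $a$, I would refine it to an ultrafilter $\mathcal U$ on $Z$ with $\mathcal U\to a$. Since the principal sets ${\uparrow}w=\{u\in W:wu=w\}$ and ${\downarrow}w=\{u\in W:wu=u\}$ are closed in $W$ (equalizers of continuous maps into a Hausdorff space), any $z\in Z$ whose down-set ${\downarrow}z\cap Z$ lies in $\mathcal U$ satisfies $a\le z$, and any $z$ whose up-set lies in $\mathcal U$ satisfies $z\le a$. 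Pushing $\mathcal U$ below a suitable element one may assume the filter $J=\{z\in Z:{\downarrow}z\cap Z\in\mathcal U\}$ is nonempty; it is a downward-directed subsemilattice, so by the lemma $c:=\inf J$ exists, lies in $Z$, and satisfies $a\le c$, and a maximal chain in $J$ has infimum $c$. It remains to rule out $a<c$, and this is the crux: were $a<c$, functional Hausdorffness gives a continuous $f\colon W\to\IR$ with $f(a)\ne f(c)$, and one would exploit the convergence $f(\mathcal U)\to f(a)$ together with the push-forward ultrafilter $c\,\mathcal U\to c\cdot a=a$, which is concentrated on ${\downarrow}c\cap Z$, to exhibit an element of $J$ lying strictly below $c$ — contradicting that $c$ is the infimum of a maximal chain in $J$. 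Carrying out this last ``gap-closing'' step is the main obstacle; once it is done, $c=a$, so $a\in\overline C$ for the maximal chain $C$ and $(\ast)$ holds.
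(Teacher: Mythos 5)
This theorem is quoted from \cite{BBR} and is not reproved in the present paper, so the only internal point of comparison is the parallel argument for $\w$-Lawson semilattices (Lemmas~\ref{l1} and~\ref{l2}), where the authors work directly with a pair $(y_1,y_2)$ in the closure of $\le_X$, building interleaved sequences $x_n\le y_n$ inside shrinking closed neighborhoods and using completeness to extract a pair witnessing $(y_1,y_2)\in\;\le_X$. Your reduction runs the other way: you deduce closedness of $\le_X$ from the statement that complete subsemilattices are closed, applied to $Z=\;\le_X$ inside $W=Y\times Y$. That reduction is sound as far as it goes --- $Y\times Y$ is a functionally Hausdorff semitopological semilattice, $\le_X$ is a subsemilattice of it, and your verification that $\le_X$ is complete (via chain-compactness of $X\times X$ and the fact that closures of chains are chains) is correct. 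But note that in the literature the implication goes in the opposite direction (closedness of $X$ is derived from closedness of $\le_X$, as in Lemma~\ref{l3} here), so your route places the entire burden on the closedness claim for $Z$ in $W$.

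That claim is where the proposal breaks down, in two concrete places. First, the step ``pushing $\mathcal U$ below a suitable element one may assume $J\ne\emptyset$'' is not available: replacing $\mathcal U$ by $z\,\mathcal U$ changes its limit from $a$ to $za$, so it does not preserve the problem, and since a complete semilattice has a smallest but in general no largest element, the set $J=\{z\in Z:{\downarrow}z\cap Z\in\mathcal U\}$ can genuinely be empty. Second, and decisively, the ``gap-closing'' step --- ruling out $a<c$ --- is exactly where the whole content of the theorem lives, and you explicitly leave it undone. The sketch offered does not close it: the pushforward $c\,\mathcal U$ converges to $ca=a$ and (unless ${\uparrow}c\cap Z\in\mathcal U$, in which case $c\le a$ and antisymmetry finishes) is concentrated on elements strictly below $c$, but those elements need not have their down-sets in $\mathcal U$, so nothing forces a member of $J$ strictly below $c$; iterating the construction merely produces a new ultrafilter and a new infimum $c'$ with $a\le c'\le c$, i.e.\ a regress rather than a contradiction. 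A continuous $f:W\to\IR$ separating $a$ from $c$ is not a homomorphism and does not by itself interact with the order; the actual proof in \cite{BBR} has to interleave such functions with an inductive construction of monotone approximants, in the spirit of Lemma~\ref{l1}, to make the process terminate. As it stands the proposal is a plausible framework with the essential step missing.
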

\smallskip
 
In this paper we shall show that the answer to Problem~\ref{prob:main} is affirmative under the additional condition that the semitopological semilattice $Y$ is $\w$-Lawson. We shall also prove a counterpart of Theorem~\ref{BBR2} for complete subsemilattices of $\w$-Lawson semitopological semilattices.

We define a topologized semilattice $X$ to be {\em $\w$-Lawson} if for any distinct points $x,y\in X$ there exists a countable family $\U$ of closed neighborhoods of $x$ such that $\bigcap\U$ is a subsemilattice of $X$ that does not contain $y$. A topologized semilattice $X$ is $\w$-Lawson if and only if it is Hausdorff and has at most countable Lawson number $\Law(X)$.

The {\em Lawson number} $\Law(X)$ of a Hausdorff topologized semilattice  $X$ is defined as the smallest cardinal $\kappa$ such that for any distinct points $x,y\in X$ there exists a family $\U$ of closed neighborhoods of $x$ such that $|\U|\le\kappa$ and $\bigcap\U$ is a subsemilattice of $X$ that does not contain $y$.

The Lawson number will be studied in more details  in Section~\ref{s:Gdelta}.
In that section we shall prove that every Hausdorff topological semilattice $X$ has $\Law(X)\le\w$. On the other hand, for any infinite cardinal $\lambda$ we shall construct a Hausdorff zero-dimensional semitopological semilattice $X$ of cardinality $|X|=\lambda$ and Lawson number $\Law(X)=\cf(\lambda)$. In Section~\ref{s:main} we prove the main result of this paper:

\begin{theorem}\label{t:main} For any complete subsemilattice $X$ of a $\w$-Lawson semitopological semilattice $Y$, the natural partial order $\le_X$ of $X$ is a closed subset of $Y\times Y$. Consequently, $X$ is closed in $Y$. 
\end{theorem}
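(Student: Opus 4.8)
The plan is to reduce to the case where $X$ is dense in $Y$, then to distil from an arbitrary net in $\le_X$ two \emph{essential bounds} which, thanks to completeness, lie in $X$, and finally to use the $\w$-Lawson property of $Y$ to force these bounds to coincide with the limit of the net. Two preliminary reductions are easy. Since $Y$ is a semitopological semilattice, separate continuity of the multiplication, applied to nets, shows that the closure $\overline X$ of the subsemilattice $X$ in $Y$ is again a subsemilattice, and with the subspace topology $\overline X$ is still $\w$-Lawson: intersecting the closed neighbourhoods furnished by the $\w$-Lawson property of $Y$ with $\overline X$ yields closed neighbourhoods in $\overline X$, and intersecting a subsemilattice of $Y$ with $\overline X$ yields a subsemilattice of $\overline X$. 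As $\le_X\subseteq X\times X\subseteq\overline X\times\overline X$ and $\overline X\times\overline X$ is closed in $Y\times Y$, it suffices to prove $\le_X$ closed in $\overline X\times\overline X$, so I may assume $X$ dense in $Y$. Moreover, once $\le_X$ is closed in $Y\times Y$ the set $X$ is automatically closed in $Y$: for a net $x_i\to z$ with all $x_i\in X$ one has $(x_i,x_i)\in\le_X$ and $(x_i,x_i)\to(z,z)$, hence $(z,z)\in\overline{\le_X}=\le_X$ and $z\in X$.

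Now fix $(a,b)$ in the closure of $\le_X$ in $Y\times Y$ and a net $(x_\alpha,y_\alpha)_{\alpha\in A}$ in $\le_X$, so that $x_\alpha y_\alpha=x_\alpha$ (that is, $x_\alpha\le y_\alpha$) while $x_\alpha\to a$ and $y_\alpha\to b$. For each $\alpha$ the set of finite products $\{x_{\gamma_1}\cdots x_{\gamma_k}:k\ge 1,\ \gamma_1,\dots,\gamma_k\ge\alpha\}$ is a down-directed subsemilattice of $X$; by completeness of $X$ --- via the standard transfinite reduction of directed infima to chain infima, every down-directed subset of a complete topologized semilattice has an infimum in its closure --- it has an infimum $m_\alpha\in X$. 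Since $m_\alpha\le x_\gamma\le y_\gamma$ for all $\gamma\ge\alpha$, and the principal up-set ${\uparrow}m_\alpha$ is closed in $Y$ (Hausdorffness together with separate continuity makes all principal up- and down-sets closed), passing to the limit gives $m_\alpha\le a$ and $m_\alpha\le b$. The set $\{m_\alpha:\alpha\in A\}$ is up-directed, hence has a supremum $p:=\essinf_\alpha x_\alpha\in X$ satisfying $p\le a$ and $p\le b$. Dually, using suprema of up-directed families of finite products of the $y_\alpha$, I obtain an element $q\in X$ (the \emph{essential supremum} of $(y_\alpha)$) with $a\le q$ and $b\le q$. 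Thus $p\le a\le q$ and $p\le b\le q$, with $p,q\in X$ and $(p,q)\in\le_X$.

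The decisive step is to prove $p=a$ and $q=b$: granted this, $a=p\in X$, $b=q\in X$ and $a=p\le b$, so $(a,b)\in\le_X$, which finishes the proof (and, by the reduction above, also yields that $X$ is closed in $Y$). To prove $p=a$ one argues by contradiction: assuming $p\ne a$, the $\w$-Lawson property of $Y$ supplies a countable family $(U_n)_{n\in\w}$ of closed neighbourhoods of $p$, which may be taken decreasing, whose intersection $S:=\bigcap_{n\in\w}U_n$ is a subsemilattice of $Y$ with $a\notin S$. One then has to interlace the two limit processes packed into $p=\essinf_\alpha x_\alpha$ --- namely, that $p$ is the supremum of the up-directed net $(m_\alpha)$, while each $m_\alpha$ is in turn the infimum of a down-directed net of finite products of the $x_\gamma$ --- diagonalising against the countable family $(U_n)$ so as to produce a single decreasing chain of finite products of the $x_\gamma$ that is driven into $S$; by completeness this chain has an infimum, which lies in $X$ and, $S$ being closed, in $S$, and a careful tracking of the chosen indices through the closed principal up- and down-sets of $X$ yields the contradiction. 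The equality $q=b$ is obtained symmetrically.

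I expect this last construction to be the main obstacle of the proof. The difficulty is that the neighbourhoods $U_n$ need not themselves be subsemilattices, so a product of members of $U_n$ may leave $U_n$; to exploit that $S$ is a subsemilattice one must therefore drive the relevant finite products into the \emph{whole} countable intersection $S$, not merely into the individual $U_n$. This forces a delicate dovetailing of the two limit processes: the finite index sets must be chosen far enough out in $A$ that the corresponding products are close to $p$ (hence already inside the first several of the $U_n$), and yet rich enough that the running product stays inside every $U_n$, all the while keeping control, in the partial order of $X$, of how these products relate to $a$. It is precisely in balancing these requirements that the completeness of $X$ (which turns a countable family of running products into a chain with infimum in $X$) and the $\w$-Lawson property of $Y$ (a countable family of closed neighbourhoods whose intersection is a subsemilattice) have to be played against one another.
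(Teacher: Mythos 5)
Your framing reductions are fine (the deduction of closedness of $X$ from closedness of $\le_X$ via the diagonal net $(x_i,x_i)\to(z,z)$ is even simpler than the paper's argument, which uses the preimage of $\le_X$ under $y\mapsto(\min X,y)$), and the idea of extracting from the net elements of $X$ of the form $\sup_\alpha\inf\{\text{finite products of tail elements}\}$ is the right germ. But there are two genuine problems. First, the sandwich $p\le a\le q$, $p\le b\le q$ is false as stated: in a meet-semilattice the only elements you can manufacture from the $y_\alpha$ are finite products, which lie \emph{below} their factors, so the dual construction yields $q\le b$ (and $p\le q$), never $b\le q$. There is no ``essential supremum'' above $b$ available. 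What one actually needs, and what the paper's Lemma~\ref{l1} delivers, is a pair $p\le q$ in $X$ with $p$ trapped in a prescribed countable family of closed neighborhoods of $a$ and $q$ in one of $b$.

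Second, and more seriously, the ``decisive step'' $p=a$, $q=b$ --- which you defer and yourself flag as the main obstacle --- is where the entire content of the theorem lies, and the sketch you give for it does not go through. Your $p$ is built from \emph{all} finite products $x_{\gamma_1}\cdots x_{\gamma_k}$ of tail elements, but in a merely semitopological semilattice a product of several points each close to $a$ need not be close to $a$ (joint continuity is exactly what is missing), so nothing relates $p$ to the neighborhoods of $a$ and there is no reason to expect $p=a$; the products cannot in general be ``driven into'' any prescribed set near $a$ or near $p$. The paper avoids this by choosing the points one at a time: at stage $n$, separate continuity of the shift by the already-fixed product $x_i\cdots x_{n-1}$ is used to shrink the neighborhood from which $x_n$ is drawn, so that all running products $x_i\cdots x_n$ and $x_i\cdots x_nx$ stay inside prescribed closed neighborhoods $U_i$ of $a$ (conditions $(2_n)$, $(3_n)$ in Lemma~\ref{l1}); only then do completeness and the chain structure produce a point of $X\cap\bigcap_nU_n$. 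Moreover, even with such a point in hand, the $\w$-Lawson property does not make the intersection of a single countable family equal to $\{a\}$, so one cannot conclude $p=a$ from one family: the paper's Lemma~\ref{l2} must run over the up-directed system of \emph{all} countable Lawson families at $a$, take the supremum of the resulting points of $X$, and only then use a second Lawson family, together with cofinality of a subfamily inside a closed subsemilattice, to reach a contradiction. Both mechanisms are absent from your proposal, so the proof as written has an essential gap in addition to the incorrect inequalities for $q$.
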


\begin{remark} By \cite{BBR2} (and \cite{BBR3}), there exists a metrizable (Lawson)  semitopological semilattice $X$ whose partial order $\le_X$ is not closed in $X\times X$.
\end{remark}

Since the completeness is preserved by continuous homomorphisms into Hausdorff semitopological semilattices (see Lemma~\ref{l:hom}), Theorem~\ref{t:main} implies the following corollary giving a partial answer to Problem~\ref{prob:main}.

\begin{corollary}\label{c:main} For any continuous homomorphism $h:X\to Y$ from a complete topologized semilattice to an $\w$-Lawson semitopological semilattice $Y$ the image $h(X)$ is closed in $Y$.
\end{corollary}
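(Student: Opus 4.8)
The plan is to deduce this corollary directly from Theorem~\ref{t:main}, reducing the statement about the homomorphism $h$ to a statement about the subsemilattice $h(X)$ of $Y$. First I would observe that $h(X)$ is a subsemilattice of $Y$: since $h$ is a homomorphism, for any $a,b\in X$ we have $h(a)\,h(b)=h(ab)\in h(X)$, so $h(X)$ is closed under the semilattice operation of $Y$. Next I would note that $Y$, being $\w$-Lawson, is Hausdorff; indeed, as recalled in the introduction, a topologized semilattice is $\w$-Lawson if and only if it is Hausdorff and has at most countable Lawson number. Hence $Y$ is a Hausdorff semitopological semilattice.

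The key step is to show that $h(X)$ is complete as a subsemilattice of $Y$. For this I would invoke Lemma~\ref{l:hom}, which (as stated in the sentence preceding the corollary) asserts that completeness is preserved by continuous homomorphisms into Hausdorff semitopological semilattices. Since $X$ is complete, $h$ is a continuous homomorphism, and the codomain $Y$ is a Hausdorff semitopological semilattice, Lemma~\ref{l:hom} yields that the image $h(X)$, endowed with the subspace topology inherited from $Y$, is a complete topologized semilattice.

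Having established that $h(X)$ is a complete subsemilattice of the $\w$-Lawson semitopological semilattice $Y$, I would apply Theorem~\ref{t:main} to the inclusion $h(X)\subseteq Y$. Theorem~\ref{t:main} then gives that the natural partial order $\le_{h(X)}$ is closed in $Y\times Y$ and, consequently, that $h(X)$ is closed in $Y$, which is exactly the desired conclusion.

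I do not expect a genuine obstacle here, since the corollary is essentially the composition of Lemma~\ref{l:hom} with Theorem~\ref{t:main}. The only points that require care are confirming that $h(X)$ inherits completeness via Lemma~\ref{l:hom} (rather than being merely a continuous image of a complete space, which would not by itself suffice), and verifying that the subspace topology on $h(X)$ is precisely the topology with respect to which Theorem~\ref{t:main} is stated, so that the two results may be chained without a mismatch.
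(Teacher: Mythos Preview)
Your proposal is correct and follows essentially the same route as the paper's own argument (given as Lemma~\ref{l4}): observe that $Y$ is Hausdorff, use Lemma~\ref{l:hom} to conclude that $h(X)$ is complete, and then apply Theorem~\ref{t:main} (the paper cites its Lemma~\ref{l3}, which is precisely the ``$X$ is closed in $Y$'' half of Theorem~\ref{t:main}). The only refinement worth noting is that Lemma~\ref{l:hom}, as stated, requires the codomain to be ${\uparrow}$-closed rather than Hausdorff; the paper makes this explicit by remarking that a Hausdorff (hence $T_1$) semitopological semilattice is ${\uparrow}$-closed, a point you implicitly absorb by quoting the introductory paraphrase of Lemma~\ref{l:hom}.
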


\begin{problem} Let $X$ be a complete subsemilattice of an $\w_1$-Lawson semitopological semilattice $Y$. Is $X$ closed in $Y$? Is the natural partial order $\le_X$ of $X$ closed in $X\times X$?
\end{problem}

\section{The Lawson number of a Hausdorff topologized semilattice}\label{s:Gdelta}

It is easy to see that each $\w$-Lawson topologized semilattice is Hausdorff. In fact, $\w$-Lawson topologized semilattices can be equivalently defined as Hausdorff topologized semilattices with countable Lawson number.

\begin{definition} The {\em Lawson number} $\bar\Lambda(X)$ of a Hausdorff topologized semilattice $X$ is the smallest cardinal $\kappa$ such that for any distinct points $x,y\in X$ there exists a family $\U$ of closed neighborhoods of $x$ such that $|\U|\le\kappa$ and the intersection $\bigcap\U$ is a subsemilattice of $X$ that does not contain $y$.
\end{definition}

For any Hausdorff topologized semilattice $X$ the Lawson number $\Law(X)$ is well-defined and does not exceed the {\em closed pseudocharacter} $\bar\psi(X)$ of $X$, defined as the smallest cardinal $\kappa$ such that for any point $x\in X$ there exists a family $\U$ of closed neighborhoods of $x$ such that $|\U|\le\kappa$ and  $\bigcap\U=\{x\}$. Therefore, $\Law(X)\le\bar\psi(X)$ for any Hausdorff topologized semilattice $X$.

Observe that a topologized semilattice $X$ is $\w$-Lawson if and only if $X$ is Hausdorff and has $\Law(X)\le\w$. 

\begin{definition} A topologized semilattice $X$ is defined to be {\em $\kappa$-Lawson} for a cardinal $\kappa$ if $X$ is Hausdorff and $\Law(X)\le\kappa$. 
\end{definition}

The Lawson number admits the following simple characterization.

\begin{proposition} The Lawson number of a Hausdorff topologized semilattice $X$ is equal to the smallest cardinal $\kappa$ such that for any distinct points $x,y\in X$ there exist a closed subsemilattice $L$ in $X$ and a family $\V$ of  closed neighborhoods of $x$ such that $|\V|\le \kappa$ and $\bigcap\V\subset L\subset X\setminus\{y\}$.
\end{proposition}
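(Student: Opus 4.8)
The plan is to show that a cardinal $\kappa$ satisfies the condition defining $\Law(X)$ if and only if it satisfies the condition in the statement; the two ``smallest such $\kappa$'' then coincide. Both implications are proved pointwise over pairs of distinct points $x,y\in X$, turning a family witnessing one condition into a family witnessing the other with no increase in cardinality.

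First I would treat the implication from the definition of $\Law(X)$ to the condition in the statement. Given distinct $x,y\in X$ and a family $\U$ of closed neighborhoods of $x$ with $|\U|\le\kappa$ and $\bigcap\U$ a subsemilattice of $X$ not containing $y$, I would simply put $\V:=\U$ and $L:=\bigcap\U$. Since $y\notin\bigcap\U$, the family $\U$ is nonempty, so $L=\bigcap\U$ is an intersection of closed sets and hence closed; it is a subsemilattice by hypothesis, it avoids $y$, and obviously $\bigcap\V=L\subseteq L\subseteq X\setminus\{y\}$. Nothing subtle happens here.

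For the converse implication, fix distinct $x,y\in X$ and let $L$ be a closed subsemilattice of $X$ and $\V$ a family of closed neighborhoods of $x$ with $|\V|\le\kappa$ and $\bigcap\V\subseteq L\subseteq X\setminus\{y\}$. Note that $\V\ne\emptyset$, since otherwise $\bigcap\V=X\subseteq L$ would force $y\in L$. The family to use is
\[
\U:=\{\,V\cup L: V\in\V\,\}.
\]
Each set $V\cup L$ is closed, being a union of the two closed sets $V$ and $L$, and it is a neighborhood of $x$: since $V$ is a neighborhood of $x$, the point $x$ lies in the interior of $V$ and therefore in the interior of the larger set $V\cup L$. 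Moreover $|\U|\le|\V|\le\kappa$. The only computation to carry out is
\[
\bigcap\U=\bigcap_{V\in\V}(V\cup L)=\Bigl(\bigcap_{V\in\V}V\Bigr)\cup L=\bigcap\V\cup L=L,
\]
where the last equality uses $\bigcap\V\subseteq L$. Hence $\bigcap\U=L$ is a subsemilattice of $X$ not containing $y$, so $\U$ witnesses the condition in the definition of $\Law(X)$ for the pair $(x,y)$.

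The only step that requires an idea rather than routine bookkeeping is the choice of $\U$ in the converse: one cannot simply reuse $\V$, because $\bigcap\V$ need not be a subsemilattice (we assume no continuity of the multiplication), and one cannot recover $L$ from an intersection of neighborhoods of $x$ by \emph{adding} more neighborhoods. Replacing each $V$ by $V\cup L$ is precisely the right move: it is invisible to the requirement of being a neighborhood of $x$, yet it collapses the whole intersection onto the subsemilattice $L$. The degenerate case $|X|\le1$ requires no argument, since then both cardinals equal $0$.
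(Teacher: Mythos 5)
Your proof is correct and follows essentially the same route as the paper: the forward direction takes $L:=\bigcap\U$ directly, and the converse uses exactly the paper's trick of replacing each $V\in\V$ by $V\cup L$ so that the intersection collapses to $L$. The small additional remarks (nonemptiness of $\V$, the degenerate case) are fine but not needed beyond what the paper already does.
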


\begin{proof} We should prove that $\Law(X)=\Lambda(X)$ where $\Lambda(X)$ is  the smallest cardinal $\kappa$ such that for any distinct points $x,y\in X$ there exist a closed subsemilattice $L$ in $X$ and a family $\V$ of  closed neighborhoods of $x$ such that $|\V|\le \kappa$ and $\bigcap\V\subset L\subset X\setminus\{y\}$.
\smallskip

To see that $\Lambda(X)\le\Law(X)$, for any distinct points $x,y\in X$ use the definition of $\Law(X)$ and find a family $\U$ of closed neighborhoods of $x$ such that $|\U|\le\Law(X)$ and $L:=\bigcap\U$ is a closed subsemilattice of $X$ that does not contain $y$. Then $\bigcap\U=L\subset X\setminus\{y\}$, witnessing that $\Lambda(X)\le\bar \Lambda(X)$.
\smallskip 

To see that $\Law(X)\le\Lambda(X)$, for any distinct points $x,y\in X$ use the definition of $\Lambda(X)$ and find a closed subsemilattice $L$ of $X$ and  a family $\V$ of closed neighborhoods of $x$ such that $|\V|\le\Lambda(X)$ and $\bigcap\V\subset L\subset X\setminus\{y\}$. Then $\U:=\{V\cup L:V\in\V\}$ is a family of closed neighborhoods of $x$ such that $|\U|\le |\V|\le\Lambda(X)$ and $\bigcap\U=(\bigcap\V)\cup L=L$ is a closed subsemilattice of $X$ that does not contain $y$ and witnesses that $\Law(X)\le\bar\Lambda(X)$.
\end{proof}

The notion of a $1$-Lawson semilattice extends the well-known notion of a  Lawson semilattice (or else a topologized semilattice with small subsemilattices), introduced and studied by Lawson \cite{Lawson} (see also \cite[Chapter 2]{CHK}). Following \cite[p.12]{CHK}, we define a topologized semilattice $X$ to be 
\begin{itemize}
\item {\em Lawson} if it has a base of the topology consisting of open subsemilattices;
\item a {\em $V$-semilattice} if for any point $x\in X$ and $y\notin {\uparrow}x$ there exists a point $v\in X\setminus{\downarrow} y$ such that ${\uparrow}v$ is a neighborhood of $x$ in $X$;
\item {\em $\II$-separated} if for any distinct points $x,y\in X$ there exists a continuous homomorphism $f:X\to\II$ such that $f(x)\ne f(y)$.
\end{itemize} Here by $\II$ we denote the unit interval $[0,1]$ endowed with the semilattice operation $\min$.

\begin{proposition} For a Hausdorff semitopological semilattice $X$, consider the following conditions:
\begin{enumerate}
\item $X$ is $1$-Lawson;
\item $X$ is Lawson;
\item $X$ is a $V$-semilattice;
\item $X$ is $\II$-separated.
\end{enumerate}
Any of the conditions \textup{(2),(3),(4)} implies \textup{(1)}. If the space $X$ is compact, then the conditions \textup{(1)--(4)} are equivalent.
\end{proposition}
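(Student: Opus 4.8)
My strategy is to prove the three implications into (1) directly, and, in the compact case, to establish (1)$\Rightarrow$(3) and then quote the classical theory of Lawson semilattices (\cite{Lawson}, \cite[Chapter 2]{CHK}) for the equivalence of (2), (3) and (4). I shall use three elementary facts, valid in every Hausdorff semitopological semilattice $X$: (i) the closure of a subsemilattice of $X$ is a subsemilattice (a standard two-step argument using the separate continuity of the multiplication); (ii) for every $v\in X$ the sets ${\uparrow}v$ and ${\downarrow}v$ are closed subsemilattices of $X$, being respectively $\lambda_v^{-1}(\{v\})$ and the equalizer of $\lambda_v\colon z\mapsto vz$ with the identity; (iii) for every $v\in X$ the (open) set $X\setminus{\uparrow}v$ is a subsemilattice, because if $ab\in{\uparrow}v$ then $a\in{\uparrow}v$ (as $ab\le_X a$ and ${\uparrow}v$ is an up-set). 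Recall that $\Law(X)\le 1$ means precisely that for any distinct $x,y\in X$ there is a single closed subsemilattice which is a neighborhood of $x$ and omits $y$.

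To prove the implications into (1), fix distinct $x,y\in X$. Assuming (2): by Hausdorffness choose disjoint open sets $U\ni x$ and $V\ni y$, then by the Lawson base an open subsemilattice $U'$ with $x\in U'\subseteq U$; by (i) the closure $\overline{U'}$ is a closed subsemilattice and a neighborhood of $x$, and it omits $y$ since $U'\cap V=\emptyset$. Assuming (4): pick a continuous homomorphism $f\colon X\to\II$ with $f(x)\ne f(y)$ and a real number $c$ strictly between $f(x)$ and $f(y)$; since $[0,c]$ and $[c,1]$ are subsemilattices of $\II$, the appropriate one of $f^{-1}([0,c])$ and $f^{-1}([c,1])$ is a closed subsemilattice which is a neighborhood of $x$ and omits $y$. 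Assuming (3): if $xy\ne x$, the $V$-semilattice property applied to the pair $(x,y)$ yields $v\notin{\downarrow}y$ with ${\uparrow}v$ a neighborhood of $x$, and ${\uparrow}v$ does the job since $v\notin{\downarrow}y$ is equivalent to $y\notin{\uparrow}v$; if instead $xy=x$, then $x\ne y$ forces $x\notin{\uparrow}y$, so the $V$-semilattice property applied to $(y,x)$ yields $w\notin{\downarrow}x$ with ${\uparrow}w$ a neighborhood of $y$, and then by (iii) and (i) the closure $S$ of the open subsemilattice $X\setminus{\uparrow}w\ni x$ is a closed subsemilattice and a neighborhood of $x$, while $y\notin S$ because ${\uparrow}w$ is a neighborhood of $y$.

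For the compact case it remains, in view of the preceding paragraph and the classical equivalence of (2), (3) and (4) for compact $X$, to prove (1)$\Rightarrow$(3). I will use that every compact subsemilattice $S$ of $X$ has a least element: a cluster point of the net of finite products of elements of $S$ (indexed by the finite subsets of $S$ directed by inclusion) lies in $S$ and below every element of $S$. Now let $x,y$ satisfy $xy\ne x$; then $x\notin{\downarrow}(xy)$, while ${\downarrow}(xy)$ is a closed, hence compact, subsemilattice. For each $t\in{\downarrow}(xy)$ use $\Law(X)=1$ to choose a closed subsemilattice $S_t$ which is a neighborhood of $x$ with $t\notin S_t$; by compactness of ${\downarrow}(xy)$ finitely many of the open sets $X\setminus S_t$ cover ${\downarrow}(xy)$, so the intersection $S$ of the corresponding sets $S_t$ is a closed subsemilattice with $x\in\mathrm{int}(S)$ and $S\cap{\downarrow}(xy)=\emptyset$. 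Put $v=\min S$. Then $v\le_X x$ (as $x\in S$), hence ${\uparrow}v\supseteq S$ is a neighborhood of $x$; and $v\notin{\downarrow}(xy)$, so $v\not\le_X y$ (otherwise $v\le_X x$ and $v\le_X y$ would give $v\le_X xy$), i.e. $v\notin{\downarrow}y$. Thus $v$ witnesses the $V$-semilattice condition for $(x,y)$.

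The only genuine obstacle I anticipate is the statement quoted in the last paragraph — that a compact $V$-semilattice is $\II$-separated, equivalently embeds into a power of $\II$. This is Lawson's theorem, and it carries the real weight of the structure theory of compact semilattices; the remaining implications reduce to the bookkeeping above once facts (i)--(iii) and the least-element lemma are in hand.
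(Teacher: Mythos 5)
Your proofs of the implications $(2)\Rightarrow(1)$, $(3)\Rightarrow(1)$ and $(4)\Rightarrow(1)$ are correct and coincide with the paper's: the same case split on whether $x\le_X y$ in the $V$-semilattice argument, the same use of the closedness of ${\uparrow}v$ and of the closure of the open subsemilattice $X\setminus{\uparrow}w$, and the same preimage argument for $\II$-separation (the paper takes an arbitrary closed neighborhood $N\subset\II$ of $f(x)$, which works because every subset of the linear semilattice $\II$ is a subsemilattice; your choice of $[0,c]$ or $[c,1]$ is a harmless specialization). Where you genuinely diverge is the compact case. The paper disposes of it entirely by citing Theorem 7.1 of \cite{BBw} (with the classical equivalence of (2) and (4) attributed to \cite{Lawson} and \cite{Law74}), so condition (1) does not even explicitly enter the cited statement. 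You instead prove $(1)\Rightarrow(3)$ directly: the least-element lemma via a cluster point of the decreasing net of finite products, the finite subcover of the compact closed subsemilattice ${\downarrow}(xy)$ by complements of $1$-Lawson witnesses $S_t$, and the verification that $v=\min\bigl(\bigcap_i S_{t_i}\bigr)$ satisfies $v\le_X x$, ${\uparrow}v\supseteq S$ and $v\notin{\downarrow}y$ (since $v\le_X x$ and $v\le_X y$ would force $v\le_X xy$). All of these steps check out. This buys a self-contained reduction of the new condition (1) to the classical circle $(2)\Leftrightarrow(3)\Leftrightarrow(4)$ for compact semilattices, which you correctly identify as the only imported ingredient (Lawson's theorem that compact $V$-semilattices are $\II$-separated, together with \cite{Law74} to pass from semitopological to topological); the paper's approach is shorter but leaves the reader to unpack the cited theorem. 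The only cosmetic caveat is that your appeal to ``the classical theory'' must be read for compact \emph{topological} semilattices, so the reduction of the semitopological hypothesis via \cite{Law74} should be stated explicitly, as the paper does.
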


\begin{proof} To prove that $X$ is $1$-Lawson, fix any distinct points $x,y\in X$. We need to find a closed subsemilattice $L$ in $X$ that contains $x$ in its interior but does not contain $y$.
\smallskip

$(2)\Ra(1)$ Assume that $X$ is Lawson. Since $X$ is Hausdorff, there exists a closed neighborhood $N_x\subset X$ of $x$ such that $y\notin N_x$. Since $X$ is Lawson, there exists an open subsemilattice $V\subset X$ such that $x\in V\subset N_x$. Since $X$ is a semitopological semilattice, the closure $\overline{V}$ of the semilattice $V$ is a closed subsemilattice that contains $x$ in its interior but does not contain $y$. Therefore, the semilattice $X$ is $1$-Lawson.
\smallskip

$(3)\Ra(1)$ Assume that $X$ is a $V$-semilattice. If $x\notin{\downarrow}y$, then $y\notin{\uparrow}x$ and there exists an element $v\in X\setminus{\downarrow}y$ such that the upper set ${\uparrow}v$ contains $x$ in its interior. Since $X$ is a Hausdorff semitopological semilattice the upper set ${\uparrow}v=\{z\in X:zv=v\}$ is closed. Then the closed subsemilattice ${\uparrow}v$ is a neighborhood of $x$ that does not contain $y$. If $x\in{\downarrow}y$, then $x\notin{\uparrow}y$. Since $X$ is a $V$-semilattice, there exists an element $u\in X\setminus{\downarrow}x$ such that the upper set ${\uparrow}u$ contains $y$ in its interior. Observe that the complement $U:=X\setminus{\uparrow}u$ is an open subsemilattice of $X$, containing $x$. Then $\overline{U}$ is a closed subsemilattice of $X$, which is a neighborhood of $x$ that does not contain $y$.
\smallskip

$(4)\Ra(1)$ Assuming that $X$ is $\II$-separated, we can find a continuous homomorphism $f:X\to\II$ such that $f(x)\ne f(y)$. Choose any closed neighborhood $N\subset\II$ of $f(x)$ such that $f(y)\notin N$. Then $f^{-1}(N)$ is a closed subsemilattice in $X$ that contains $x$ in its interior but does not contain $y$.
\smallskip

Now assume that $X$ is compact. In this case the conditions (1)--(4) are equivalent by Theorem 7.1 in \cite{BBw}. In fact, the equivalence of the conditions (2) and (4) is a classical result of Lawson \cite{Lawson} and \cite{Law74}.
\end{proof}

\begin{example} The topological semilattice $\IZ^\omega$ with the Tychonoff product topology and coordinatewise operation of minimum is Lawson and $\II$-separated but not a $V$-semilattice.
\end{example}

 By \cite[Example~2.21]{CHK}, there exists a metrizable compact topological
semilattice, which is not Lawson and hence is not $1$-Lawson. However, such a semilattice necessarily is $\w$-Lawson as shown by the following simple proposition.

\begin{proposition}\label{p:Gdelta-Lawson} Each Hausdorff topological semilattice $X$ is $\w$-Lawson.
\end{proposition}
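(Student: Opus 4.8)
The plan is to show directly that $\Law(X)\le\w$; since $X$ is assumed Hausdorff, this is exactly the assertion that $X$ is $\w$-Lawson. So fix two distinct points $x,y\in X$; I want to produce a countable family $\U$ of closed neighborhoods of $x$ whose intersection is a subsemilattice of $X$ not containing $y$.

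First I would use Hausdorffness, but only once and only for a single neighborhood: choose disjoint open sets $W_0\ni x$ and $V\ni y$, so that $V$ witnesses $y\notin\overline{W_0}$. Next, using the (joint) continuity of the operation $m\colon X\times X\to X$, $m(a,b)=ab$, at the idempotent point $(x,x)$ (where $m(x,x)=x$), I would recursively pick open neighborhoods $W_n$ of $x$ with $W_{n+1}\cdot W_{n+1}\subseteq W_n$ for all $n\in\w$ (at step $n$, take $W_{n+1}\times W_{n+1}\subseteq m^{-1}(W_n)$). The family to take is then $\U:=\{\overline{W_n}:n\in\w\}$, a countable family of closed neighborhoods of $x$, and $y$ is already excluded from $\bigcap\U$ because $y\notin\overline{W_0}$.

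The one genuine step is checking that $\bigcap_{n\in\w}\overline{W_n}$ is closed under the operation. For $a,b$ in this intersection and any $n$, we have $a,b\in\overline{W_{n+1}}$, hence $ab=m(a,b)\in m\bigl(\overline{W_{n+1}}\times\overline{W_{n+1}}\bigr)=m\bigl(\overline{W_{n+1}\times W_{n+1}}\bigr)\subseteq\overline{m(W_{n+1}\times W_{n+1})}=\overline{W_{n+1}\cdot W_{n+1}}\subseteq\overline{W_n}$, so $ab$ again lies in $\bigcap_{n}\overline{W_n}$. The inclusion $m(\overline{A\times B})\subseteq\overline{m(A\times B)}$ is precisely the continuity of $m$ as a map on the product space, i.e.\ joint continuity; this is the only place where being a topological (rather than merely semitopological) semilattice is used, which fits the fact that the Lawson number can be large for semitopological semilattices.

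The main point to get right is a non-obstacle in disguise. One is tempted to strengthen the construction to $\overline{W_{n+1}}\subseteq W_n$, which would require $X$ to be regular and is not available for an arbitrary Hausdorff topological semilattice. But it is unnecessary: to keep $y$ out of $\bigcap_{n}\overline{W_n}$ it suffices to keep it out of the single set $\overline{W_0}$, and Hausdorffness delivers exactly that, while the squaring condition $W_{n+1}\cdot W_{n+1}\subseteq W_n$ alone is what makes the intersection of the (possibly large) closed neighborhoods $\overline{W_n}$ a subsemilattice.
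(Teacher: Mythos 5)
Your proof is correct and is essentially the paper's own argument: pick $W_0$ by Hausdorffness so that $y\notin\overline{W_0}$, recursively shrink neighborhoods of $x$ with $W_{n+1}\cdot W_{n+1}\subseteq W_n$ using joint continuity at $(x,x)$, and observe that $\bigcap_n\overline{W_n}$ is then a closed subsemilattice containing $x$ but not $y$. The only difference is that you spell out the closure computation $\overline{W_{n+1}}\cdot\overline{W_{n+1}}\subseteq\overline{W_n}$ that the paper dispatches with a ``hence,'' and you correctly note that no regularity is needed.
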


\begin{proof} Given two distinct points $x,y\in X$, choose a decreasing sequence $(U_n)_{n\in\w}$ of open neighborhoods of $x$ such that $y\notin\overline{U_0}$ and $U_{n}\cdot U_n\subset U_{n-1}$ and hence $\overline{U_n}\cdot\overline{U_n}\subset \overline{U_{n-1}}$ for all $n\in\IN$. The choice of $U_0$ is possible by the Hausdorff property of $X$, and the choice of the neighborhoods $U_n$ is possible by the continuity of the semilattice operation at $(x,x)$. 
 It follows that the intersection $\bigcap_{n\in\w}\overline{U_n}$ is a closed subsemilattice of $X$ containing $x$ but not $y$. 
\end{proof}

\begin{corollary} Each compact Hausdorff semitopological semilattice is $\w$-Lawson.
\end{corollary}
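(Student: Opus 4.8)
The plan is to deduce the corollary from Proposition~\ref{p:Gdelta-Lawson} after upgrading the separate continuity of the semilattice operation to joint continuity. Concretely, I would invoke the classical fact that the semilattice operation of a compact Hausdorff semitopological semilattice is automatically jointly continuous, so that every compact Hausdorff semitopological semilattice $X$ is in fact a Hausdorff \emph{topological} semilattice (see, e.g., \cite{CHK}). Granting this, Proposition~\ref{p:Gdelta-Lawson} applies to $X$ and yields $\Law(X)\le\w$; since $X$ is Hausdorff, this is exactly the assertion that $X$ is $\w$-Lawson.

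I want to stress that this upgrade is genuinely needed and is the only nontrivial point. The proof of Proposition~\ref{p:Gdelta-Lawson} produces, for distinct points $x,y$, a decreasing sequence $(U_n)_{n\in\w}$ of open neighborhoods of $x$ with $y\notin\overline{U_0}$ and $U_n\cdot U_n\subseteq U_{n-1}$, and the inductive choice of $U_n$ from $U_{n-1}$ relies on continuity of the operation at the diagonal point $(x,x)$. Mere separate continuity does not provide such neighborhoods: it only allows one to shrink a neighborhood in one coordinate at a time, and iterating this produces a nested chain of quantifiers that does not collapse to a single neighborhood $U_n$ with $U_n\cdot U_n\subseteq U_{n-1}$. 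Hence the corollary does \emph{not} follow from Proposition~\ref{p:Gdelta-Lawson} on purely formal grounds; it is the compactness of $X$ that restores joint continuity of the operation, and this passage from separate to joint continuity is the main obstacle. Once it is in hand, the remainder of the argument is a verbatim repetition of the proof of Proposition~\ref{p:Gdelta-Lawson} applied to $X$.

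If one wished to avoid quoting the joint-continuity fact and argue in a self-contained way, one could attempt to rerun the construction of Proposition~\ref{p:Gdelta-Lawson} while locating, via a Namioka-type theorem, a dense set of diagonal points at which the operation is jointly continuous, and then transferring the neighborhood construction to $x$ by means of the retractions $z\mapsto az$, which map $X$ onto ${\downarrow}a$. However this essentially re-proves the joint-continuity statement and is no shorter, so I would prefer to cite the classical result directly.
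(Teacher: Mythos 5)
Your proof is correct and is exactly the paper's argument: the authors likewise invoke the classical joint-continuity theorem (they cite Lawson's 1974 paper \cite{Law74}) to conclude that a compact Hausdorff semitopological semilattice is a topological semilattice, and then apply Proposition~\ref{p:Gdelta-Lawson}. Your additional discussion of why separate continuity alone does not suffice is accurate but not needed for the deduction.
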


\begin{proof} By \cite{Law74}, each compact Hausdorff semitopological semilattice is a topological semilattice and by Proposition~\ref{p:Gdelta-Lawson}, is a $\w$-Lawson.
\end{proof}

Let us also notice the following trivial (but useful) fact.

\begin{proposition} Each \textup{(}Hausdorff\textup{)} linear topologized semilattice $X$ is Lawson \textup{(}and  $1$-Lawson\textup{)}.
\end{proposition}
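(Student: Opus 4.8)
The plan is to exploit the fact that in a linear semilattice the meet of two elements is always one of the two elements, so that \emph{every} subset of $X$ is automatically a subsemilattice. Concretely, if $X$ is a chain in itself, then for any $a,b\in X$ we have $ab\in\{a,b\}$; hence whenever $S\subseteq X$ and $a,b\in S$, the product $ab$ lies in $\{a,b\}\subseteq S$, so $S$ is a subsemilattice of $X$. Note that this uses nothing about the topology (and, in particular, no continuity of the semilattice operation).

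From this observation both assertions follow immediately. For the first, since every open subset of $X$ is an open subsemilattice, the whole topology is a base of $X$ consisting of open subsemilattices, so $X$ is Lawson by the very definition.

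For the second, assume in addition that $X$ is Hausdorff and fix distinct points $x,y\in X$. By the Hausdorff property there are disjoint open sets $U\ni x$ and $V\ni y$; then $L:=X\setminus V$ is a closed set with $x\in U\subseteq L$ and $y\notin L$, so $L$ is a closed neighborhood of $x$ that does not contain $y$. By the observation above $L$ is a subsemilattice of $X$, so the one-element family $\U:=\{L\}$ of closed neighborhoods of $x$ witnesses $\bigcap\U=L\subseteq X\setminus\{y\}$, i.e. $\Law(X)\le 1$ and $X$ is $1$-Lawson.

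There is no real obstacle here: the only point worth isolating is the elementary remark that a subset of a chain is closed under the semilattice operation, after which both claims are a direct unwinding of the definitions of \emph{Lawson} and \emph{$1$-Lawson}.
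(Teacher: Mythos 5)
Your proof is correct, and since the paper omits the argument entirely (calling the proposition a ``trivial (but useful) fact''), your write-up supplies exactly the intended reasoning: in a chain every subset is a subsemilattice, so every open set is an open subsemilattice (giving Lawson), and under the Hausdorff assumption the complement of an open set separating $y$ from $x$ is a closed subsemilattice neighborhood of $x$ missing $y$ (giving $\Law(X)\le 1$). No issues.
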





Let us recall that a topological space $X$ is {\em Urysohn} if any distinct points in $X$ have disjoint closed neighborhoods. It is clear that each Urysohn space is Hausdorff.

We define a topological space $X$ to be {\em $\kappa$-Urysohn} for a cardinal $\kappa$ if for any distinct points $x,y\in X$ there are families $\U_x$ and $\U_y$ of open sets on $X$ such that $\max\{|\U_x|,|\U_y|\}\le\kappa$, $x\in\bigcap\U_x$, $y\in\bigcap\U_y$ and the sets $\bigcap_{U\in\U_x}\overline{U}$ and $\bigcap_{V\in\U_y}\overline{V}$ are disjoint.

 It is easy to see that a topological space is Urysohn if and only if it is $1$-Urysohn.
 
\begin{example} For every infinite cardinal $\kappa$, there exists a Hausdorff Lawson topological semilattice, which is not $\kappa$-Urysohn.
\end{example}

\begin{proof} Take any ordinal $\lambda$ of cofinality $\mathrm{cf}(\lambda)>\kappa$ (for example, put $\lambda:=\kappa^+$). Consider the set $L=\{x_\alpha\}_{\alpha\le\lambda}\cup\{z\}\cup\{y_\alpha\}_{\alpha\le \lambda}$ of pairwise distinct points endowed with the linear order  in which $x_\alpha<x_\beta<z<y_\beta<y_\alpha$ for any  ordinals $\alpha<\beta\le\lambda$. Let $\ddot L:=L\setminus\{x_{\lambda},y_{\lambda}\}$. On the set $$X=(\ddot L\times[0,\lambda))\cup(\{x_{\lambda},y_{\lambda}\}\times\{\lambda\})$$ consider the semilattice operation
$$(x,\alpha)\cdot(y,\beta):=\begin{cases}
(\min\{x,y\},\min\{\alpha,\beta\})&\mbox{if $\alpha,\beta<\lambda$};\\
(\min\{x,z\},\alpha),&\mbox{if $\alpha<\lambda=\beta$};\\
(\min\{z,y\},\beta),&\mbox{if $\beta<\lambda=\alpha$};\\
(\min\{x,y\},\lambda),&\mbox{if $\alpha=\lambda=\beta$}.
\end{cases}
$$
Endow $X$ with the topology $\tau$ consisting of all sets $U\subset X$ satisfying the following three conditions:
\begin{itemize}
\item if $(z,\alpha)\in U$ for some $\alpha\in[0,\lambda)$, then $\{(x_\gamma,\alpha),(y_\gamma,\alpha):\beta<\gamma<\lambda\}\subset U$ for some $\beta\in[0,\lambda)$;
\item if $(x_{\lambda},\lambda)\in U$, then $\{(x_\beta,\gamma):\beta,\gamma\in[\alpha,\lambda)\}\subset U$ for some $\alpha\in[0,\lambda)$;
\item if $(y_{\lambda},\lambda)\in U$, then $\{(y_\beta,\gamma):\beta,\gamma\in[\alpha,\lambda)\}\subset U$ for some $\alpha\in[0,\lambda)$.
\end{itemize}
Taking into account that $\mathrm{cf}(\lambda)>\kappa$, we can show that $(X,\tau)$ is a required Hausdorff Lawson topological semilattice which is not $\kappa$-Urysohn.
\end{proof}  

Now we construct Hausdorff zero-dimensional semitopological semilattices having an arbitrarily large Lawson number. We recall that a topological space is {\em zero-dimensional} if it has a base of the topology consisting of open-and-closed sets.

\begin{example}\label{e:main} For any infinite cardinal $\lambda$ there exists a Hausdorff zero-dimensional semitopological semilattice $X$ such that $|X|=\lambda$ and $\Law(X)=\bar\psi(X)=\mathrm{cf}(\lambda)$. 
\end{example}

\begin{proof} Consider the set  
$$X:=\{A\subset \lambda:\mbox{$A=\lambda$ or $A$ is finite}\}$$endowed with the semilattice operation of union. This semilattice has cardinality $|X|=\lambda$. 
Here we identify the cardinal $\lambda$ with the set $[0,\lambda)$ of all ordinals smaller than $\lambda$.

Now the trick is to introduce an appropriate topology on the semilattice $X$. For this we define several kinds of sets in $\lambda$.

A finite subset $A\subset \lambda$ is defined to be {\em sparse} if $|A\cap[\alpha,\alpha+\w)|\le 1$ for any ordinal $\alpha\in\lambda$.

For a set $A$ and an ordinal $\alpha\in\lambda$ consider the set
$$S[A;\alpha]:=\{B\in X:\mbox{$B\cap[0,\alpha)=A\cap[0,\alpha)$ and $B\cap[\alpha,\lambda)$ is sparse}\},$$and observe that $\lambda\notin S[A;\alpha]$.

Let $\alpha\in\lambda$ be an ordinal, $n$ be a finite ordinal and $\e$ be a positive real number. A subset $A\subset\lambda$ is called {\em $(\alpha,n,\e)$-fat} if there exists a limit ordinal $\beta\in[\alpha,\lambda)$ and a finite ordinal $m>n$ that
\begin{itemize}
\item[(i)] the set $A\cap[\beta+\w,\lambda)$ is sparse;
\item[(ii)] $[\beta,\beta+\w)\cap A=[\beta,\beta+m]$;
\item[(iii)] the set $[0,\beta)\cap A$ is finite and has cardinality $<\e\cdot m$.
\end{itemize}
The conditions (i),(ii) ensure that the ordinal $\beta$ is unique.

Consider the subset 
$$F[\alpha,n,\e]:=\{\lambda\}\cup\{A\in X:\mbox{$A$ is $(\alpha,n,\e)$-fat}\}$$of $X$.

Now we define a topology $\tau$ on the semilattice $X$. This topology consists of the sets $U\subset X$ satisfying the following two conditions:
\begin{itemize}
\item[(a)] for any finite subset $A\in U$ of $\lambda$ there exists an ordinal $\alpha\in\lambda$ such that $A\in S[A,\alpha]\subset U$;
\item[(b)] if $\lambda\in U$, then there exist ordinals $\alpha\in\lambda$, $k\in\w$, and a positive real number $\e$ such that $F[\alpha,k,\e]\subset U$.
\end{itemize}
It is easy to see that $\tau$ is a well-defined topology on $X$. Now we show that this topology is Hausdorff and zero-dimensional.

\begin{claim}\label{cl1} For an element $A\in X$ and any ordinal $\alpha\in\lambda$ the set $S[A;\alpha]$ is open in $(X,\tau)$.
\end{claim}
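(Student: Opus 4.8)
The plan is to check directly that the set $U:=S[A;\alpha]$ obeys the two defining conditions (a) and (b) of the topology $\tau$. Condition (b) costs nothing: we already noted that $\lambda\notin S[A;\alpha]$ (and in fact every member of $S[A;\alpha]$ is a finite subset of $\lambda$, since it meets $[\alpha,\lambda)$ in a sparse, hence finite, set), so the hypothesis $\lambda\in U$ never occurs. Thus everything reduces to verifying condition (a) at an arbitrary finite point $B\in U$.

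The key observation is that $S[B;\beta]$ depends on $B$ only through the trace $B\cap[0,\beta)$. Concretely, I would fix a finite $B\in U=S[A;\alpha]$; then $B\cap[0,\alpha)=A\cap[0,\alpha)$, so for every $C\in X$ the two equalities ``$C\cap[0,\alpha)=B\cap[0,\alpha)$'' and ``$C\cap[0,\alpha)=A\cap[0,\alpha)$'' are equivalent, whence $S[B;\alpha]=S[A;\alpha]=U$. Moreover $B\in S[B;\alpha]$, because $B\cap[0,\alpha)=B\cap[0,\alpha)$ trivially and $B\cap[\alpha,\lambda)$ is sparse by the very assumption $B\in S[A;\alpha]$. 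Hence the ordinal $\alpha$ itself witnesses $B\in S[B;\alpha]\subset U$, which is exactly condition (a) at $B$; since $B$ was arbitrary, this gives $U=S[A;\alpha]\in\tau$.

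The step that might look like the obstacle a priori is keeping $C\cap[\alpha,\lambda)$ sparse for all $C$ in a basic neighbourhood of $B$ — but this concern dissolves precisely because we re-centre the basic set at $B$ using the \emph{same} ordinal $\alpha$, so that ``$C\cap[\alpha,\lambda)$ is sparse'' is literally part of the definition of the neighbourhood and is enforced automatically. Consequently no case analysis on $\omega$-blocks or on limit versus successor ordinals is needed for this claim; that kind of bookkeeping will only become relevant later, when one must show that the sets $F[\alpha,k,\e]$ are open and that they separate $\lambda$ from the finite points of $X$.
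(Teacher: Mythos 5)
Your proof is correct and follows essentially the same route as the paper's: for a finite $B\in S[A;\alpha]$ one re-centres the basic set at $B$ with the same ordinal $\alpha$, noting $B\in S[B;\alpha]\subset S[A;\alpha]$ (you observe the slightly sharper equality $S[B;\alpha]=S[A;\alpha]$), and condition (b) is vacuous since $\lambda\notin S[A;\alpha]$. No gaps.
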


\begin{proof} To see that $S[A;\alpha]$ is open, take any element $B\in S[A;\alpha]$ and observe that $B\cap[0,\alpha)=A\cap[0,\alpha)$ and $B\cap[\alpha,\lambda)$ is sparse. Then $B\in S[B;\alpha]\subset S[A;\alpha]\subset X\setminus\{\lambda\}$ and the set $S[A;\alpha]$ is open by the definition of the topology $\tau$.
\end{proof}

\begin{claim}\label{cl2} For any ordinals $\alpha\in\lambda$, $n\in\w$ and a positive real number $\e$, the set
$F[\alpha,n,\e]$ is open and closed in $(X,\tau)$.
\end{claim}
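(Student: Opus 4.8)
The plan is to verify directly, from the two defining conditions~(a) and~(b) of the topology $\tau$, that both $F[\alpha,n,\e]$ and its complement $X\setminus F[\alpha,n,\e]$ belong to $\tau$. Condition~(b) costs nothing either way: since $\lambda\in F[\alpha,n,\e]$ and trivially $F[\alpha,n,\e]\subseteq F[\alpha,n,\e]$, condition~(b) holds for $F[\alpha,n,\e]$ with the parameters $(\alpha,n,\e)$ themselves; and since $\lambda\notin X\setminus F[\alpha,n,\e]$, condition~(b) is vacuous for the complement. So the whole argument reduces to checking condition~(a) for each of the two sets, i.e.\ to producing, for a prescribed finite member $A$ of the set in question, an ordinal $\gamma\in\lambda$ with $A\in S[A;\gamma]$ and $S[A;\gamma]$ still contained in that set.

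For openness I would take a finite $A\in F[\alpha,n,\e]$ together with a limit ordinal $\beta\in[\alpha,\lambda)$ and a finite $m>n$ witnessing that $A$ is $(\alpha,n,\e)$-fat, and put $\gamma:=\beta+\w$ (which lies in $\lambda$; when $\lambda=\w$ there is no limit $\beta<\lambda$ and no finite fat set, so this step never occurs). Clause~(i) of fatness says precisely that $A\cap[\beta+\w,\lambda)$ is sparse, so $A\in S[A;\gamma]$. If $B\in S[A;\gamma]$, then $B$ agrees with $A$ on $[0,\beta+\w)$ — in particular $[0,\beta)\cap B=[0,\beta)\cap A$ and $[\beta,\beta+\w)\cap B=[\beta,\beta+m]$ — and $B\cap[\beta+\w,\lambda)$ is sparse; so the same pair $(\beta,m)$ witnesses that $B$ is $(\alpha,n,\e)$-fat, i.e.\ $B\in F[\alpha,n,\e]$. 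This gives $S[A;\gamma]\subseteq F[\alpha,n,\e]$, and $F[\alpha,n,\e]$ is open.

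For closedness I would fix a finite $A\notin F[\alpha,n,\e]$ and choose $\gamma$ to be a \emph{successor} ordinal with $\gamma>\sup A+1$; such a $\gamma$ belongs to $\lambda$ because $\lambda$ is an infinite cardinal. Since $\gamma>\sup A$ we have $A\cap[\gamma,\lambda)=\emptyset$, which is sparse, so $A\in S[A;\gamma]$. Now take any $B\in S[A;\gamma]$ — so $B\cap[0,\gamma)=A$ and $B\cap[\gamma,\lambda)$ is sparse — and suppose, toward a contradiction, that $B$ is $(\alpha,n,\e)$-fat, witnessed by a limit ordinal $\beta'\in[\alpha,\lambda)$ and a finite $m'>n$. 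I would split into three cases according to where $\beta'$ sits relative to $\gamma$. (1)~If $\beta'+\w\le\gamma$, then $B$ agrees with $A$ on all of $[0,\beta'+\w)$, and $A\cap[\beta'+\w,\lambda)$ is a subset of the sparse set $B\cap[\beta'+\w,\lambda)$, hence sparse; so the pair $(\beta',m')$ would already witness that $A$ is $(\alpha,n,\e)$-fat — impossible. (2)~If $\beta'\ge\gamma$, then $[\beta',\beta'+\w)\cap B=[\beta',\beta'+m']$ is a subset of the sparse set $B\cap[\gamma,\lambda)$, yet it contains the two consecutive ordinals $\beta'$ and $\beta'+1$ (as $m'\ge1$) — contradicting sparseness. (3)~If $\beta'<\gamma<\beta'+\w$, say $\gamma=\beta'+j$ with $1\le j<\w$, then the ordinal $\gamma-1=\beta'+(j-1)$ lies in $[\beta',\beta'+\w)$ but, exceeding $\sup A$, lies outside $A$ and hence outside $B$; therefore $m'<j-1$, which forces the block $[\beta',\beta'+m']$ to lie inside $[\beta',\gamma)$, where $B$ coincides with $A$, while $B$ (and likewise $A$) meets $[\gamma,\beta'+\w)$ in the empty set; consequently $[\beta',\beta'+\w)\cap A=[\beta',\beta'+m']$ and $[0,\beta')\cap A=[0,\beta')\cap B$, so once more $(\beta',m')$ witnesses that $A$ is $(\alpha,n,\e)$-fat — impossible. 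Hence no $B\in S[A;\gamma]$ is fat, and since also $\lambda\notin S[A;\gamma]$ we conclude $S[A;\gamma]\subseteq X\setminus F[\alpha,n,\e]$; this verifies~(a) for the complement, so $F[\alpha,n,\e]$ is closed.

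I expect case~(3) of the closedness part to be the only real obstacle. An element $B\in S[A;\gamma]$ is free to differ from $A$ beyond $\gamma$, and the danger is that $B$ prolongs by one step a maximal run of consecutive ordinals of $A$ ending just below $\gamma$, turning a near-miss block of length $n$ into an honest fatness block of length $n+1$, so that $B$ becomes $(\alpha,n,\e)$-fat while $A$ is not. Choosing $\gamma$ strictly above $\sup A+1$ (rather than merely above $\sup A$) is exactly what rules this out: it forces $A$, hence $B$, to have a gap immediately below $\gamma$, so every fatness block of $B$ inside $[\beta',\beta'+\w)$ must already sit in the region where $B$ and $A$ coincide, and the putative fatness of $B$ collapses back onto $A$.
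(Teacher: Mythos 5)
Your proof is correct. The openness half coincides with the paper's argument: both take the fatness witness $\beta$ of a finite $A\in F[\alpha,n,\e]$ and observe that $S[A;\beta+\w]\subset F[\alpha,n,\e]$, while condition (b) of the topology is satisfied by $F[\alpha,n,\e]$ itself. For closedness the two arguments diverge in organization. The paper fixes the \emph{smallest} limit ordinal $\beta$ with $A\cap[\beta+\w,\lambda)$ sparse and the minimal $m$ with $A\cap[\beta,\beta+\w)\subset[\beta,\beta+m]$, lists the four ways in which $A$ can fail to be $(\alpha,n,\e)$-fat, and asserts that $S[A;\beta+\w]$ misses $F[\alpha,n,\e]$ in each case; the verification of those four cases is left to the reader. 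You instead push the neighborhood parameter past $\sup A+1$ and run a single contradiction: any fat $B\in S[A;\gamma]$ would transfer its fatness witness back to $A$. Your trichotomy on the position of the witness $\beta'$ relative to $\gamma$ is exhaustive and each case checks out (in case (2) the block $[\beta',\beta'+m']$ with $m'\ge 1$ violates sparseness of $B\cap[\gamma,\lambda)$; in case (3) the gap at $\gamma-1$ forces the block into the region where $B=A$). This buys you independence from the enumeration of failure modes of fatness, at the cost of a slightly larger neighborhood; your closing remark correctly identifies why $\gamma>\sup A+1$, rather than $\gamma>\sup A$, is the essential point. Both routes are sound; yours makes explicit the verification that the paper compresses into ``in all these cases.''
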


\begin{proof} Given any element $A\in F[\alpha,n,\e]\setminus\{\lambda\}$, find a unique limit ordinal $\beta\ge\alpha$ witnessing that $A$ is $(\alpha,n,\e)$-fat. Then $S[A;\beta+\w]\subset F[\alpha,n,\e]$, witnessing that the set $F[\alpha,n,\e]$ is $\tau$-open.

To see that this set is closed in $(X,\tau)$, choose any set $A\in X\setminus F[\alpha,n,\e]$. It follows that $A$ is a finite subset of $\lambda$, which is not $(\alpha,n,\e)$-fat. Let $\beta$ is the smallest limit ordinal such that the intersection $A\cap[\beta+\w,\lambda)$ is sparse. Let $m\in\w$ be the smallest finite ordinal such that $A\cap[\beta,\beta+\w)\subset [\beta,\beta+m]$. Since $A$ is not $(\alpha,n,\e)$-fat, one of the following conditions holds:
\begin{enumerate}
\item $\beta<\alpha$;
\item $m\le n$;
\item $\beta\ge\alpha$ and $m>n$ but $[\beta,\beta+m]\not\subset A$;
\item $\beta\ge\alpha$, $m>n$, $[\beta,\beta+m]\subset A$ but $|A\cap[0,\beta)|\ge \e\cdot m$.
\end{enumerate}
In all these cases $S[A;\beta+\w]$ is a $\tau$-open  neighborhood of $A$ such that $S[A;\beta+\w]\cap F[\alpha,n,\e]=\emptyset$.
\end{proof}
  
\begin{claim}\label{cl3}  For any finite set $A\in X$ of $\lambda$ and any ordinal $\alpha\in\lambda$, the set $S[A;\alpha]$ is closed in $(X,\tau)$.
\end{claim}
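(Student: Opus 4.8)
The plan is to show that the complement $X\setminus S[A;\alpha]$ is $\tau$-open, by producing for every $C\in X\setminus S[A;\alpha]$ a $\tau$-open neighborhood $W_C$ of $C$ with $W_C\cap S[A;\alpha]=\emptyset$. Since $\lambda\notin S[A;\alpha]$, the point $C$ is either $\lambda$ itself or a finite subset of $\lambda$, and I would split the argument accordingly, invoking Claim~\ref{cl2} in the first case and Claim~\ref{cl1} in the second. The key observation driving everything is that there are exactly two ways for an element $B\in X$ to fail to belong to $S[A;\alpha]$: either $B\cap[0,\alpha)\ne A\cap[0,\alpha)$, or $B\cap[\alpha,\lambda)$ is not sparse; and I would arrange the neighborhood $W_C$ so that this failure persists on all of $W_C$.

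If $C=\lambda$, I would take $W_C:=F[\alpha,0,1]$. It contains $\lambda$ and is $\tau$-open by Claim~\ref{cl2}. To see $W_C\cap S[A;\alpha]=\emptyset$, note that every $B\in F[\alpha,0,1]\setminus\{\lambda\}$ is $(\alpha,0,1)$-fat, so there is a limit ordinal $\beta\in[\alpha,\lambda)$ and a finite $m\ge1$ with $[\beta,\beta+\w)\cap B=[\beta,\beta+m]$; then $\beta$ and $\beta+1$ both lie in $B\cap[\alpha,\lambda)\cap[\beta,\beta+\w)$, so $B\cap[\alpha,\lambda)$ is not sparse and hence $B\notin S[A;\alpha]$.

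If $C$ is a finite subset of $\lambda$, I would fix an ordinal $\gamma\in\lambda$ with $\gamma\ge\alpha$ and $C\subseteq[0,\gamma)$ (possible since $C$ is finite and $\lambda$ is a limit ordinal) and put $W_C:=S[C;\gamma]$. Then $C\in S[C;\gamma]$, because $C\cap[\gamma,\lambda)=\emptyset$ is sparse, and $S[C;\gamma]$ is $\tau$-open by Claim~\ref{cl1}. For disjointness, suppose toward a contradiction that $B\in S[C;\gamma]\cap S[A;\alpha]$. From $B\in S[C;\gamma]$ and $C\subseteq[0,\gamma)$ we get $B\cap[0,\gamma)=C\cap[0,\gamma)=C$, so $C\subseteq B$ and, since $\gamma\ge\alpha$, also $C\cap[0,\alpha)=B\cap[0,\alpha)=A\cap[0,\alpha)$. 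Hence the only possible reason for $C\notin S[A;\alpha]$ is that $C\cap[\alpha,\lambda)$ is not sparse; but $C\subseteq B$ yields $C\cap[\alpha,\lambda)\subseteq B\cap[\alpha,\lambda)$, so $B\cap[\alpha,\lambda)$ is not sparse either, contradicting $B\in S[A;\alpha]$.

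This completes the plan. The argument is essentially routine once Claims~\ref{cl1} and~\ref{cl2} are available; the only points needing mild care are keeping the two modes of failure apart and choosing $\gamma$ to be simultaneously $\ge\alpha$ and large enough to absorb the finite set $C$. I do not anticipate any genuine obstacle here.
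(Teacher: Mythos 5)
Your proof is correct and follows essentially the same route as the paper: show the complement is open by exhibiting an $F[\alpha,\cdot,\cdot]$-neighborhood of $\lambda$ and an $S[\cdot;\cdot]$-neighborhood of each finite point off $S[A;\alpha]$, using Claims~\ref{cl1} and~\ref{cl2}. The only cosmetic difference is that the paper splits the finite case into two subcases (initial segments differ vs.\ non-sparse tail) with two different choices of the cut-off ordinal, while you pick a single $\gamma\ge\alpha$ absorbing $C$ and handle both failure modes with one neighborhood; both versions are fine.
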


\begin{proof} Take any element $B\in X\setminus S[A;\alpha]$. If $B=\lambda$, then $F[\alpha,1,1]$ is a neighborhood of $B$, disjoint with $S[A;\alpha]$.

If $B\ne \lambda$, then $B\notin S[A;\alpha]$ implies that either $B\cap[\alpha,\lambda)$ is not sparse or $B\cap[\alpha,\lambda)$ is sparse but $B\cap[0,\alpha)\ne A\cap[0,\alpha)$. In the latter case $S[B;\alpha]$ is a $\tau$-open neighborhood of $B$, disjoint with $S[A;\alpha]$. So, we assume that $B\cap[\alpha,\lambda)$ is not sparse. In this case we can choose any ordinal $\beta\in\lambda$ with $B\subset[0,\beta)$ and observe that $S[B,\beta]$ is a $\tau$-open neighborhood of $B$ such that $S[B;\beta]\cap S[A;\alpha]=\emptyset$.
\end{proof}

\begin{claim}\label{cl:H} The topology $\tau$ is Hausdorff. 
\end{claim}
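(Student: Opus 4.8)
The plan is to verify the Hausdorff axiom by separating two distinct points $A,B\in X$ using the open sets introduced in Claims~\ref{cl1}--\ref{cl3}, splitting into cases according to whether the points are finite subsets of $\lambda$ or equal to $\lambda$. First I would handle the case when both $A$ and $B$ are finite: since $A\ne B$, pick any ordinal $\alpha\in\lambda$ large enough that $A\cup B\subset[0,\alpha)$; then $A\cap[0,\alpha)=A\ne B=B\cap[0,\alpha)$, so $A\in S[A;\alpha]$ and $B\in S[B;\alpha]$, and these two sets are disjoint because membership in $S[\,\cdot\,;\alpha]$ is determined by the trace on $[0,\alpha)$. Both sets are $\tau$-open by Claim~\ref{cl1}, so this case is done.

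Next I would treat the case when exactly one of the points equals $\lambda$, say $B=\lambda$ and $A$ finite. Choose an ordinal $\alpha\in\lambda$ with $A\subset[0,\alpha)$. Then $A\in S[A;\alpha]$, which is open and does not contain $\lambda$ (as observed right after the definition of $S[A;\alpha]$, we always have $\lambda\notin S[A;\alpha]$). For a neighborhood of $\lambda$ disjoint from $S[A;\alpha]$ I would invoke Claim~\ref{cl3}, which tells us $S[A;\alpha]$ is closed; hence $X\setminus S[A;\alpha]$ is an open set containing $\lambda$ and disjoint from the open set $S[A;\alpha]$. (Alternatively, as in the proof of Claim~\ref{cl3}, one checks directly that $F[\alpha,1,1]$ is a neighborhood of $\lambda$ disjoint from $S[A;\alpha]$.) This separates $A$ from $\lambda$.

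Since two distinct points cannot both equal $\lambda$, these two cases are exhaustive, so $\tau$ is Hausdorff. In fact, because every separating set produced above ($S[A;\alpha]$, $S[B;\alpha]$, and $F[\alpha,1,1]$ or the complement of a closed $S[A;\alpha]$) is both open and closed by Claims~\ref{cl1}--\ref{cl3}, the same argument simultaneously shows that $(X,\tau)$ is zero-dimensional: the sets $S[A;\alpha]$ together with the sets $F[\alpha,n,\e]$ and their (finite Boolean combinations of) complements form a base of clopen sets, and the separation just established shows this base actually generates the topology around each point.

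The routine parts here are the bookkeeping with the definition of $S[A;\alpha]$ and the fact that its membership depends only on the trace below $\alpha$; the one spot requiring a little care is the case $B=\lambda$, where one must make sure that the neighborhood chosen for $\lambda$ genuinely misses $S[A;\alpha]$ — this is exactly what Claim~\ref{cl3} was set up to guarantee, so the main obstacle has essentially been pre-empted by the earlier claims, and the proof is short.
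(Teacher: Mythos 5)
Your proof is correct and follows essentially the same case analysis as the paper: for two distinct finite sets you separate with $S[A;\alpha]$ and $S[B;\alpha]$ for $\alpha$ large enough that $A\cup B\subset[0,\alpha)$, and for separating a finite $A$ from $\lambda$ the paper uses the pair $S[A;\alpha]$, $F[\alpha,1,1]$ while you equivalently take the complement of the clopen set $S[A;\alpha]$ (legitimately citing Claims~\ref{cl1} and~\ref{cl3}, which are established beforehand). This is only a cosmetic variation, and you even note the paper's choice of $F[\alpha,1,1]$ as an alternative, so the two arguments coincide.
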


\begin{proof} Take any distinct elements $A,B\in X$. If $A$ and $B$ are finite subsets of $X$, then we can find an ordinal $\alpha\in\lambda$ such that $A\cup B\subset [0,\alpha)$ and observe that $S[A;\alpha]$ and $S[B;\alpha]$ are disjoint $\tau$-open neighborhoods of the elements $A$ and $B$, respectively.

If $B=\lambda$, then $A$ is a finite set, contained in $[0,\alpha)$ for some ordinal  $\alpha\in\lambda$.  In this case $S[A;\alpha]$ and $F[\alpha,1,1]$ are disjoint $\tau$-open neighborhoods of $A$ and $B$, respectively,

The case $A=\lambda$ can be considered by analogy.
\end{proof}

Claims~\ref{cl1}--\ref{cl:H} show that the topology $\tau$ is Hausdorff and zero-dimensional.

\begin{claim} The topologized semilattice $(X,\tau)$ is semitopological.
\end{claim}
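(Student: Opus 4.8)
The plan is to use that the semilattice operation on $X$ is the union, which is commutative; hence it suffices to prove that for each fixed $C\in X$ the translation $s_C\colon X\to X$, $A\mapsto A\cup C$, is continuous at every point $A\in X$. If $C=\lambda$ then $s_C$ is the constant map with value $\lambda$, so we may assume that $C$ is a finite subset of $\lambda$. By the definition of $\tau$ together with Claims~\ref{cl1} and~\ref{cl2}, to prove continuity of $s_C$ at $A$ it is enough to find, for each open $U\ni s_C(A)$, an open $V\ni A$ with $s_C(V)\subseteq U$; moreover we may shrink $U$ to a set of the form $S[A\cup C;\gamma]$ when $A$ is finite, or of the form $F[\alpha,k,\e]$ when $A=\lambda$, since such sets are open by Claims~\ref{cl1} and~\ref{cl2}. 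So there are two cases to handle.

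In the case $A=\lambda$ we have $s_C(\lambda)=\lambda$, and given a basic neighborhood $F[\alpha,k,\e]$ of $\lambda$ I would take an ordinal $\alpha'\ge\alpha$ with $C\subseteq[0,\alpha')$, put $\e':=\tfrac\e2$, choose a finite $k'\ge k$ with $|C|\le\tfrac\e2(k'+1)$, and show that $s_C\big(F[\alpha',k',\e']\big)\subseteq F[\alpha,k,\e]$. For $B=\lambda$ this is clear; for $B\ne\lambda$ that is $(\alpha',k',\e')$-fat with witnessing limit ordinal $\beta\ge\alpha'$ and finite $m>k'$, the point is that $C\subseteq[0,\alpha')\subseteq[0,\beta)$, so $C$ meets neither $[\beta,\beta+\w)$ nor $[\beta+\w,\lambda)$; hence conditions (i) and (ii) of $(\alpha,k,\e)$-fatness hold for $B\cup C$ with the same $\beta$ and $m$, while in (iii) the cardinality of $[0,\beta)\cap(B\cup C)$ exceeds that of $[0,\beta)\cap B$ by at most $|C|$, and the choices $\e'=\tfrac\e2$, $m\ge k'+1$ yield $|[0,\beta)\cap B|+|C|<\tfrac\e2 m+\tfrac\e2 m=\e m$. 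Since also $m>k'\ge k$ and $\beta\ge\alpha$, we conclude $B\cup C\in F[\alpha,k,\e]$.

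In the case $A$ finite we have $s_C(A)=A\cup C$ finite, and given a basic neighborhood $S[A\cup C;\gamma]$ of $A\cup C$ (so $(A\cup C)\cap[\gamma,\lambda)$ is sparse) I would pick an open neighborhood $S[A;\delta]$ of $A$ and verify $s_C\big(S[A;\delta]\big)\subseteq S[A\cup C;\gamma]$. The step I expect to be the main obstacle is keeping the tail $(B\cup C)\cap[\gamma,\lambda)$ sparse: with an arbitrary "cut point" $\delta$, a block $[\mu,\mu+\w)$ straddling $\delta$ could absorb one point of $C$ below $\delta$ and one point of a sparse set $B$ above $\delta$. The cure is to take $\delta$ to be a \emph{limit} ordinal, chosen large enough that $\delta\ge\gamma$ and $A\cup C\subseteq[0,\delta)$ (such $\delta<\lambda$ exists since $\lambda$ is an infinite cardinal): then every block $[\mu,\mu+\w)$ lies entirely in $[0,\delta)$ or entirely in $[\delta,\lambda)$ (indeed $\mu<\delta$ forces $\mu+\w\le\delta$), so for $B\in S[A;\delta]$ the set $(B\cup C)\cap[\gamma,\lambda)=\big((A\cup C)\cap[\gamma,\delta)\big)\cup\big(B\cap[\delta,\lambda)\big)$ is a union of two sparse sets sitting on opposite sides of $\delta$, hence sparse; and $(B\cup C)\cap[0,\gamma)=(A\cup C)\cap[0,\gamma)$ because $B$ and $A$ agree on $[0,\delta)\supseteq[0,\gamma)$. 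Thus $B\cup C\in S[A\cup C;\gamma]$, as required.

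Combining the three cases shows that every translation $s_C$ is continuous, so $(X,\tau)$ is a semitopological semilattice. Apart from the sparseness point in the last case, the verification is just a matter of unwinding the definitions of $S[\cdot;\cdot]$, $F[\cdot,\cdot,\cdot]$ and $(\alpha,n,\e)$-fatness.
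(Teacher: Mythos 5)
Your proof is correct and follows essentially the same route as the paper's: reduce to continuity of each translation $s_C$, dispose of the constant case $C=\lambda$, and in the two remaining cases push a basic neighborhood $S[A;\delta]$ (resp.\ $F[\alpha',k',\tfrac\e2]$ with $k'$ enlarged so that $|C|$ is absorbed by the factor $\tfrac\e2 m$) into the prescribed basic neighborhood of $A\cup C$ (resp.\ of $\lambda$). Your insistence that the cut point $\delta$ be a \emph{limit} ordinal is a worthwhile refinement: it makes explicit the verification that $(B\cup C)\cap[\gamma,\lambda)$ is sparse (no block $[\mu,\mu+\w)$ can straddle $\delta$), a point the paper's ``replace $\alpha$ by a larger ordinal'' step leaves implicit.
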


\begin{proof} Given any element $a\in X$, we should prove that the shift $s_a:X\to X$, $s_a:x\mapsto ax$, is continuous. If $a=\lambda$, then $s_a(X)=\{\lambda\}$ is a singleton, so the continuity of $s_a$ is trivial. So, we assume that $a$ is a finite subset of $\lambda$. To check the continuity of the shift $s_a$ at a point $x\in X$, fix any neighborhood $O_{ax}\in\tau$ of the point $ax=x\cup a$. 

If $x\ne\lambda$, then $ax\ne\lambda$ and by the definition of the topology $\tau$, there exists an ordinal $\alpha\in\lambda$ such that $ax\in S[ax;\alpha]\subset O_{ax}$. Replacing $\alpha$ by a larger ordinal, we can assume that $ax\subset [0,\alpha)$. Then $O_x:=S[x;\alpha]$ is a $\tau$-open neighborhood such that $s_a(O_x)\subset S[ax;\alpha]\subset O_{ax}$.

If $x=\lambda$, then $ax=\lambda$ and by the definition of the topology $\tau$, there exist $\alpha\in\lambda$, $k\in\w$ and $\e>0$ such that $F[\alpha,k,\e]\subset O_{ax}$. Replacing $\alpha$ by a larger ordinal, we can assume that $a\subset[0,\alpha)$.   Replacing $k$ by a larger number, we can assume that $|a|\le \frac12\e k$. In this case $O_x:=F[\alpha,k,\frac12\e]$ is a $\tau$-open neighborhood of $x=\lambda$ such that $s_a(O_x)\subset F[\alpha,k,\e]\subset O_{ax}$.
\end{proof} 


\begin{claim}\label{cl6} Let $\U\subset\tau$ be a family of open sets and $L$ be a $\tau$-closed subsemilattice in $X$ such that $|\U|<\cf(\lambda)$ and $\emptyset\ne\bigcap\U\subset L$. Then $\lambda\in L$.
\end{claim}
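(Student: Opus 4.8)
The plan is to locate inside $\bigcap\U$ a set of the form $S[C;\alpha^*]$, for a suitable finite set $C$ and a suitable limit ordinal $\alpha^*<\lambda$, and then to use that $L\supseteq S[C;\alpha^*]$ is a closed subsemilattice to force $\lambda\in L$. So first I would fix a point $C\in\bigcap\U$ (which exists since $\bigcap\U\ne\emptyset$); if $C=\lambda$ we are done, so assume $C$ is a finite subset of $\lambda$. For each $U\in\U$ the defining condition (a) of the topology $\tau$ provides an ordinal $\alpha_U\in\lambda$ with $C\in S[C;\alpha_U]\subseteq U$, and the membership $C\in S[C;\alpha_U]$ says precisely that $C\cap[\alpha_U,\lambda)$ is sparse. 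Since $|\U|<\cf(\lambda)$, the supremum $\sup_{U\in\U}\alpha_U$ is an ordinal strictly below $\lambda$, so I may fix a limit ordinal $\alpha^*<\lambda$ with $\alpha^*\ge\sup_{U\in\U}\alpha_U$ and $C\subseteq[0,\alpha^*)$ (the last requirement is available because $C$ is finite).

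The crucial step is to verify that $S[C;\alpha^*]\subseteq\bigcap\U$. Fix $B\in S[C;\alpha^*]$ and $U\in\U$; I must show $B\in S[C;\alpha_U]$. Intersecting the equality $B\cap[0,\alpha^*)=C\cap[0,\alpha^*)$ with $[0,\alpha_U)$ and using $\alpha_U\le\alpha^*$ gives $B\cap[0,\alpha_U)=C\cap[0,\alpha_U)$. Moreover $B\cap[\alpha_U,\lambda)$ is the union of the set $B\cap[\alpha_U,\alpha^*)=C\cap[\alpha_U,\alpha^*)$, which is a sparse subset of $[0,\alpha^*)$ since it is contained in the sparse set $C\cap[\alpha_U,\lambda)$, and the set $B\cap[\alpha^*,\lambda)$, which is a sparse subset of $[\alpha^*,\lambda)$ because $B\in S[C;\alpha^*]$. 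Here I invoke the elementary observation that, since $\alpha^*$ is a limit ordinal, every block $[\gamma,\gamma+\w)$ lies entirely inside $[0,\alpha^*)$ or entirely inside $[\alpha^*,\lambda)$ (if $\gamma<\alpha^*$ then $\gamma+\w\le\alpha^*$), so the union of a sparse subset of $[0,\alpha^*)$ with a sparse subset of $[\alpha^*,\lambda)$ is again sparse. Hence $B\cap[\alpha_U,\lambda)$ is sparse, so $B\in S[C;\alpha_U]\subseteq U$; as $U$ was arbitrary, $S[C;\alpha^*]\subseteq\bigcap\U\subseteq L$. This interplay between sparseness and the block structure around $\alpha^*$ is the only delicate point of the proof; the rest is routine.

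Finally I would deduce $\lambda\in L$. Since every singleton is sparse, $C\cup\{a\}\in S[C;\alpha^*]\subseteq L$ for every ordinal $a\in[\alpha^*,\lambda)$, and as $L$ is a subsemilattice (closed under unions) it follows that $C\cup F\in L$ for every finite set $F\subseteq[\alpha^*,\lambda)$. It now suffices to show that $\lambda$ lies in the closure of this family, since $L$ is closed. Given a basic neighbourhood $F[\beta,k,\e]$ of $\lambda$ with $\beta\in\lambda$, $k\in\w$ and $\e>0$, choose a limit ordinal $\beta'<\lambda$ with $\beta'\ge\max\{\beta,\alpha^*\}$ together with a finite ordinal $m>\max\{k,|C|/\e\}$, and put $F:=[\beta',\beta'+m]\subseteq[\alpha^*,\lambda)$. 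Then one checks directly that $C\cup F$ is $(\beta,k,\e)$-fat, with $\beta'$ and $m$ as the required witnesses: $(C\cup F)\cap[\beta'+\w,\lambda)=\emptyset$ is sparse, $(C\cup F)\cap[\beta',\beta'+\w)=[\beta',\beta'+m]$, and $(C\cup F)\cap[0,\beta')=C$ has cardinality $|C|<\e\cdot m$. Hence $C\cup F\in F[\beta,k,\e]\cap L$, so every neighbourhood of $\lambda$ meets $L$ and therefore $\lambda\in\overline{L}=L$.
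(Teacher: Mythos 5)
Your proof is correct and follows essentially the same route as the paper's: use $|\U|<\cf(\lambda)$ to find a limit ordinal $\alpha^*$ past all the witnessing ordinals so that $S[C;\alpha^*]\subseteq\bigcap\U\subseteq L$, and then build $(\beta,k,\e)$-fat sets of the form $C\cup[\beta',\beta'+m]$ inside $L$ to show $\lambda\in\overline{L}=L$. Your explicit verification that $S[C;\alpha^*]\subseteq S[C;\alpha_U]$ (via the observation that no block $[\gamma,\gamma+\w)$ straddles the limit ordinal $\alpha^*$) is a detail the paper leaves implicit, and it is handled correctly.
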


\begin{proof} Fix any element $x\in\bigcap\U$. If $x=\lambda$, then $\lambda=x\in\bigcap\U\subset L$ and we are done. So, we assume that $x$ is a finite subset of $\lambda$. To show that $\lambda\in L$, take any neighborhood $O_\lambda\in\tau$ of $\lambda\in X$. By Claim~\ref{cl2}, there exist ordinals $\alpha\in\lambda$, $k\in\w$ and a positive real number $\e$ such that $F[\alpha,k,\e]\subset O_\kappa$.  

By Claim~\ref{cl1} and $|\U|\le\kappa<\mathrm{cf}(\lambda)$, there exists a limit ordinal $\beta\in[\alpha,\lambda)$ such that $x\in[0,\beta)$ and $x\in S[x;\beta]\subset\bigcap\U\subset L$.  Choose a finite ordinal $n> k$ such that $|x|<\e n$.
Observe that for every ordinal $\gamma\in [\beta,\beta+n]$ the set $x\cup\{\gamma\}$ belongs to the semilattice $L\supset S[x;\beta]$. Since $L$ is a subsemilattice, the $(\beta,k,\e)$-fat set $x\cup[\beta,\beta+n]$ belongs to $L\cap F[\beta,k,\e]\subset L\cap O_\lambda$.
\end{proof} 

\begin{claim} The semitopological semilattice $(X,\tau)$ has $\Law(X,\tau)=\bar\psi(X,\tau)=\mathrm{cf}(\lambda)$.
\end{claim}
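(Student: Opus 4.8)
The plan is to squeeze $\Law(X,\tau)$ between $\mathrm{cf}(\lambda)$ and itself: I will show that $\bar\psi(X,\tau)\le\mathrm{cf}(\lambda)$ and $\Law(X,\tau)\ge\mathrm{cf}(\lambda)$, and then the general inequality $\Law(X,\tau)\le\bar\psi(X,\tau)$ forces $\mathrm{cf}(\lambda)\le\Law(X,\tau)\le\bar\psi(X,\tau)\le\mathrm{cf}(\lambda)$, so that all three quantities equal $\mathrm{cf}(\lambda)$. Throughout, fix a strictly increasing cofinal sequence $(\alpha_i)_{i<\mathrm{cf}(\lambda)}$ in $\lambda$; since $\lambda$ is an infinite cardinal it is a limit ordinal and $\sup_{i<\mathrm{cf}(\lambda)}\alpha_i=\lambda$.

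\emph{Upper bound for $\bar\psi$.} I would fix an element $A\in X$ and produce a family of at most $\mathrm{cf}(\lambda)$ closed neighbourhoods of $A$ with intersection $\{A\}$. Suppose first that $A$ is finite. Replacing the sequence $(\alpha_i)$ by the cofinal tail of its indices with $\alpha_i>\sup A$, we may assume $A\subset[0,\alpha_0)$, so that $A\cap[\alpha_i,\lambda)=\emptyset$ is sparse and $A\in S[A;\alpha_i]$; by Claims~\ref{cl1} and~\ref{cl3} each $S[A;\alpha_i]$ is then a clopen neighbourhood of $A$. If $B\in\bigcap_i S[A;\alpha_i]$, then $B\cap[0,\alpha_i)=A\cap[0,\alpha_i)=A$ for every $i$, so each member of $B\setminus A$ is $\ge\alpha_i$ for all $i$; by cofinality $B\setminus A=\emptyset$, and since $A\subseteq B$ and $\lambda\notin S[A;\alpha_0]$ we conclude $B=A$. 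Now suppose $A=\lambda$. Here I would use the sets $F[\alpha_i,1,1]$, which are clopen by Claim~\ref{cl2} and contain $\lambda$ by definition. A finite set $B$ belonging to all of them would be $(\alpha_i,1,1)$-fat for every $i$; but the limit ordinal witnessing fatness of $B$ is unique (by conditions (i),(ii) in the definition) and would then have to be $\ge\alpha_i$ for all $i$, contradicting $\sup_i\alpha_i=\lambda$. Hence $\bigcap_i F[\alpha_i,1,1]=\{\lambda\}$. In both cases a family of cardinality $\le\mathrm{cf}(\lambda)$ suffices, so $\bar\psi(X,\tau)\le\mathrm{cf}(\lambda)$.

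\emph{Lower bound for $\Law$.} Assume, for a contradiction, that $\Law(X,\tau)<\mathrm{cf}(\lambda)$. Applying the definition of the Lawson number to the distinct points $x:=\{0\}$ and $y:=\lambda$ of $X$, we obtain a family $\U$ of closed neighbourhoods of $x$ with $|\U|<\mathrm{cf}(\lambda)$ such that $L:=\bigcap\U$ is a subsemilattice of $X$ with $\lambda\notin L$. Then $L$ is $\tau$-closed, being an intersection of closed sets, and the family of interiors $\V:=\{\mathrm{int}\,U:U\in\U\}\subset\tau$ satisfies $|\V|\le|\U|<\mathrm{cf}(\lambda)$ and $x\in\bigcap\V\subseteq\bigcap\U=L$; in particular $\emptyset\ne\bigcap\V\subseteq L$. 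Now Claim~\ref{cl6}, applied to $\V$ and $L$, yields $\lambda\in L$, a contradiction. Therefore $\Law(X,\tau)\ge\mathrm{cf}(\lambda)$, which together with the two other inequalities gives $\Law(X,\tau)=\bar\psi(X,\tau)=\mathrm{cf}(\lambda)$.

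\emph{Where the difficulty lies.} Granted Claims~\ref{cl1},~\ref{cl2},~\ref{cl3} and~\ref{cl6}, this last claim is essentially bookkeeping. The only point that needs attention is the passage from the closed neighbourhoods $U\in\U$ to their interiors in the lower-bound argument: one must observe that each $U$ contains $x$ in its interior, so that $\bigcap\V$ is nonempty (this is precisely the hypothesis $\emptyset\ne\bigcap\V$ required by Claim~\ref{cl6}), and that $L=\bigcap\U$ is genuinely $\tau$-closed and a subsemilattice, so that Claim~\ref{cl6} applies verbatim. The real work — analysing the clopen families $S[A;\alpha]$ and $F[\alpha,n,\e]$ and, above all, proving Claim~\ref{cl6} — is already behind us.
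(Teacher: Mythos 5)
Your proof is correct and follows essentially the same route as the paper: the upper bound $\bar\psi(X,\tau)\le\cf(\lambda)$ via the clopen families $S[A;\gamma]$ and $F[\gamma,1,1]$ indexed by a cofinal set, and the lower bound $\cf(\lambda)\le\Law(X,\tau)$ by applying Claim~\ref{cl6} to the pair $x=\{0\}$, $y=\lambda$. You merely spell out the passage to interiors needed to invoke Claim~\ref{cl6}, which the paper leaves implicit.
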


\begin{proof} Claim~\ref{cl6} implies that $\cf(\lambda)\le\Law(X,\tau)$. Since $\Law(X,\tau)\le\bar\psi(X,\tau)$, it remains to prove that $\bar\psi(X,\tau)\le\cf(\lambda)$.

Choose a cofinal subset $C\subset\lambda$ of cardinality $|C|=\cf(\lambda)$.
To see that $\bar\psi(X,\tau)\le\cf(\lambda)$, take any $A\in X$. If $A$ is a finite subset of $\lambda$, then $A\subset [0,\alpha)$ for some ordinal $\alpha\in\lambda$. Then $\{A\}=\bigcap_{\alpha\le \gamma\in C}S[A;\gamma]$. If $A=\lambda$, then $\{A\}=\{\lambda\}=\bigcap_{\gamma\in C}F[\gamma;1,1]$. In both cases the singleton $\{A\}$ is the intersection of $\cf(\lambda)$ many closed neighborhoods of $A$, witnessing that $\bar\psi(X,\tau)\le\cf(\lambda)$.
\end{proof}
\end{proof}

\section{Complete topologized semilattices}

In this section we recall some known properties and characterizations of complete topologized semilattices. 

By a {\em poset} we understand a set endowed with a partial order. A {\em topologized poset} is a poset endowed with a topology. So, each topologized semilattice is a topologized poset.

A subset $D$ of a poset $(X,\le)$ is called 
\begin{itemize}
\item a {\em chain} if any elements $x,y\in D$ are comparable in the sense that $x\le y$ or $y\le x$;
\item {\em up-directed} if for any $x,y\in D$ there exists $z\in D$ such that $x\le z$ and $y\le z$;
\item {\em down-directed} if for any $x,y\in D$ there exists $z\in D$ such that $z\le x$ and $z\le y$.
\end{itemize}
It is clear that each chain in a poset is both up-directed and down-directed.

A topologized posed $X$ is defined to be 
\begin{itemize}
\item {\em up-complete} if any nonempty up-directed subset $U\subset X$ has the smallest upper bound $\sup U\in\overline{U}$ in $X$;
\item {\em down-complete} if any nonempty down-directed subset $D\subset X$ has the greatest lower bound $\inf D\in\overline{D}$ in $X$.
\end{itemize}

The proof of the following classical characterization can be found in \cite{Iwamura}, \cite{Bruns}, \cite{Mark} or \cite[2.2]{BBo}.

\begin{proposition}\label{Iwamura} For a topologized poset $X$ the following conditions are equivalent:
\begin{enumerate}
\item $X$ is up-complete;
\item each non-empty chain $C\subset X$ has the smallest upper bound $\sup C\in\overline{C}$ in $X$.
\end{enumerate}
\end{proposition}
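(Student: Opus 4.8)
The plan is to prove the two implications separately. The implication $(1)\Rightarrow(2)$ is immediate: as noted in the text, every non-empty chain is up-directed, so up-completeness of $X$ gives at once $\sup C\in\overline C$ for every non-empty chain $C\subseteq X$. All the work is in $(2)\Rightarrow(1)$.

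For $(2)\Rightarrow(1)$ I would argue by transfinite induction on the cardinal $\kappa=|U|$, proving that every non-empty up-directed subset $U\subseteq X$ has a least upper bound $\sup U$ in $X$ with $\sup U\in\overline U$. If $U$ is finite, then iterating up-directedness (pick $v_2\ge u_1,u_2$ in $U$, then $v_3\ge v_2,u_3$, and so on) produces a largest element $m\in U$, so $\sup U=m\in U\subseteq\overline U$. If $\kappa$ is infinite, I invoke Iwamura's lemma (see \cite{Iwamura}, \cite{Bruns}, \cite{Mark}) to write $U=\bigcup_{\alpha<\mu}U_\alpha$ as an increasing (under inclusion) transfinite chain, indexed by a non-zero ordinal $\mu$, of non-empty up-directed subsets each of cardinality strictly less than $\kappa$. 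By the inductive hypothesis each $U_\alpha$ has a supremum $s_\alpha:=\sup U_\alpha$ with $s_\alpha\in\overline{U_\alpha}\subseteq\overline U$. Whenever $\alpha\le\beta$, the inclusion $U_\alpha\subseteq U_\beta$ makes $s_\beta$ an upper bound of $U_\alpha$, so $s_\alpha\le s_\beta$; hence $C:=\{s_\alpha:\alpha<\mu\}$ is a non-empty chain contained in $\overline U$. By hypothesis (2), $s:=\sup C$ exists in $X$ and $s\in\overline C\subseteq\overline{\overline U}=\overline U$.

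It then remains to verify that $s=\sup U$, and this is routine: every $x\in U$ lies in some $U_\alpha$, so $x\le s_\alpha\le s$ and $s$ is an upper bound of $U$; conversely, any upper bound $b$ of $U$ bounds every $U_\alpha$, hence $b\ge s_\alpha$ for all $\alpha$, so $b$ is an upper bound of $C$ and $b\ge\sup C=s$. Thus $s$ is the least upper bound of $U$, and $\sup U=s\in\overline U$, completing the induction and the proof.

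The one genuinely non-routine ingredient is Iwamura's lemma — the purely order-theoretic statement that an infinite up-directed set is an increasing union of a chain of up-directed subsets of strictly smaller cardinality. One builds the $U_\alpha$ by transfinite recursion, closing successive initial segments of a fixed enumeration of $U$, together with the previously built pieces, under the operation of choosing an upper bound of each finite subset; the key cardinal computation is that a product of two infinite cardinals below $\kappa$ stays below $\kappa$, which keeps every piece of size $<\kappa$. If one cites Iwamura's lemma as a black box — as the referenced sources do — the remaining argument is short, and the topology of $X$ enters only through the trivial inclusion $\overline C\subseteq\overline U$ and idempotence of the closure operator.
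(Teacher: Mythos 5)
Your proof is correct, and it is essentially the classical argument the paper itself defers to: the paper gives no proof of Proposition~\ref{Iwamura} but cites \cite{Iwamura}, \cite{Bruns}, \cite{Mark} and \cite[2.2]{BBo}, where exactly this transfinite induction on $|U|$ via Iwamura's decomposition of an infinite up-directed set into an increasing chain of up-directed subsets of smaller cardinality is carried out. The only point worth making explicit is the one you already flag, namely that the topology enters only through $\overline{U_\alpha}\subseteq\overline{U}$, $\overline{C}\subseteq\overline{\overline{U}}=\overline{U}$, so the argument works for an arbitrary topologized poset as the proposition requires.
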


Here for a subset $A$ of a topological space $X$ by $\overline{A}$ we denote the closure of $A$ in $X$.

Proposition~\ref{Iwamura} implies the following useful characterization of completeness in topologized semilattices.

\begin{corollary}\label{c:chara} A topologized semilattice $X$ is complete if and only if it is up-complete and down-complete.
\end{corollary}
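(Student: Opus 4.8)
The plan is to derive Corollary~\ref{c:chara} from Proposition~\ref{Iwamura} together with its order-theoretic dual, noting that the semilattice structure is irrelevant here: everything takes place in the underlying topologized poset $(X,\le_X)$, and the assertion is a purely order-theoretic consequence of Proposition~\ref{Iwamura}.

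First I would record the dual of Proposition~\ref{Iwamura}. For a topologized semilattice $X$, the reverse relation $\ge_X$ is again a partial order on $X$, and $(X,\ge_X)$ endowed with the same topology is a topologized poset. Order-reversal turns down-directed subsets of $(X,\le_X)$ into up-directed subsets of $(X,\ge_X)$, it turns greatest lower bounds into smallest upper bounds, and it leaves closures untouched; hence $X$ is down-complete if and only if $(X,\ge_X)$ is up-complete. Moreover, ``being a chain'' is a self-dual notion, so a non-empty chain $C$ in $X$ is the same as a non-empty chain in $(X,\ge_X)$, and $\inf C$ computed in $(X,\le_X)$ equals $\sup C$ computed in $(X,\ge_X)$. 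Applying Proposition~\ref{Iwamura} to the topologized poset $(X,\ge_X)$ therefore yields: $X$ is down-complete if and only if every non-empty chain $C\subset X$ has $\inf C\in\overline{C}$.

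With these two equivalences in hand the corollary is immediate. By definition, $X$ is complete precisely when every non-empty chain $C\subset X$ simultaneously satisfies $\sup C\in\overline{C}$ and $\inf C\in\overline{C}$. By Proposition~\ref{Iwamura}, the first of these conditions (for all chains $C$) is equivalent to up-completeness of $X$; by the dual statement established above, the second condition (for all chains $C$) is equivalent to down-completeness of $X$. Conjoining the two, $X$ is complete if and only if it is both up-complete and down-complete.

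I do not expect a genuine obstacle: the whole content is Proposition~\ref{Iwamura}, quoted from the literature, plus a routine dualization. The one point that deserves a careful word is the passage to the reverse order: one must check that Proposition~\ref{Iwamura} may legitimately be applied to $(X,\ge_X)$, which it can since $(X,\ge_X)$ is still a topologized poset, and that the notions involved (chain, up-/down-directedness, supremum/infimum, closure) transform correctly under order-reversal; this is exactly what the second paragraph is meant to verify.
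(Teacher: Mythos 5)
Your argument is correct and is exactly the (implicit) argument of the paper, which states Corollary~\ref{c:chara} as an immediate consequence of Proposition~\ref{Iwamura} without writing out the dualization; your second paragraph simply makes explicit the routine order-reversal that the authors leave to the reader. No gaps.
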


This corollary implies that each closed subsemilattice of a complete topologized semilattice has the smallest element. 

A topologized semilattice $Y$ is called {\em $\uparrow$-closed} if for every $y\in Y$ the upper set ${\uparrow}y=\{x\in Y:xy=y\}$ is closed in $Y$. It is easy to see that each $T_1$ semitopological semilattice is ${\uparrow}$-closed.

The following lemma (that can be derived from Corollary~\ref{c:chara}) is proved in \cite[Lemma~5.3]{BBo}.

\begin{lemma}\label{l:hom} Let $h:X\to Y$ be a continuous surjective homomorphism between topologized semilattices. If $X$ is complete and $Y$ is $\uparrow$-closed, then the topologized semilattice $Y$ is complete.
\end{lemma}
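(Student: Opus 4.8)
The plan is to work directly from the definition of completeness: fix a non-empty chain $C\subseteq Y$ and exhibit in $Y$ a least upper bound $\sup C$ and a greatest lower bound $\inf C$, each lying in $\overline C$. The idea is to lift $C$ canonically to a chain inside the complete semilattice $X$, take its supremum and infimum there, and push them forward by $h$.

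For the lift, for each $c\in C$ I would consider $L_c:=h^{-1}({\uparrow}c)=\{x\in X:h(x)\ge c\}$. The set ${\uparrow}c$ is a subsemilattice of $Y$ which is closed because $Y$ is $\uparrow$-closed, so $L_c$ is a closed subsemilattice of $X$ (a preimage of a closed subsemilattice under the continuous homomorphism $h$), and it is non-empty since $h$ is surjective and $c\in{\uparrow}c$. As $X$ is complete, $L_c$ has a smallest element $m_c$ (as noted after Corollary~\ref{c:chara}). A quick computation shows $h(m_c)=c$: one has $h(m_c)\ge c$, and picking $u\in h^{-1}(c)$ one sees $m_cu\in L_c$ (because $h(m_cu)=h(m_c)\,c=c$), so minimality forces $m_c\le u$ and hence $h(m_c)\le c$. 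For $c\le c'$ in $C$ one has ${\uparrow}c\supseteq{\uparrow}c'$, hence $L_c\supseteq L_{c'}$ and $m_c=\min L_c\le m_{c'}$; so $K:=\{m_c:c\in C\}$ is a chain in $X$ that $h$ carries order-isomorphically onto $C$, and in particular $h(K)=C$.

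Completeness of $X$ then furnishes $M:=\sup_X K$ and $\mu:=\inf_X K$ with $M,\mu\in\overline K$, and continuity of $h$ yields that the images $h(M),h(\mu)$ lie in $\overline{h(K)}=\overline C$. It is immediate that $h(M)$ is an upper bound and $h(\mu)$ a lower bound of $C$. The step I expect to be the main obstacle is showing that they are the \emph{least} upper bound and the \emph{greatest} lower bound: a continuous homomorphism need not preserve suprema or infima of chains, so a priori the transported elements could be strictly larger, resp.\ smaller, than $\sup C$, resp.\ $\inf C$. This is resolved by the uniform choice of the $L_c$. For an arbitrary upper bound $b$ of $C$ in $Y$, choose $x_b\in h^{-1}(b)$; then $h(x_b)=b\ge c$ places $x_b$ in every $L_c$, so $x_b\ge m_c$ for all $c$, hence $x_b$ bounds $K$ above, hence $M\le x_b$ and $h(M)\le b$. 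For an arbitrary lower bound $b$ of $C$, from $b\le c$ we get ${\uparrow}c\subseteq{\uparrow}b$, hence $L_c\subseteq L_b:=h^{-1}({\uparrow}b)$ for every $c$; thus $K\subseteq L_b$, and since $L_b$ is closed, $\mu\in\overline K\subseteq L_b$, i.e.\ $h(\mu)\ge b$. Therefore $h(M)=\sup C$ and $h(\mu)=\inf C$ belong to $\overline C$, so $Y$ is complete. (Alternatively one could split the argument into up-completeness and down-completeness via Corollary~\ref{c:chara} and Proposition~\ref{Iwamura}, but that detour is not needed here.)
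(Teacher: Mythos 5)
Your proof is correct, and it is essentially the intended argument: the paper does not reprove this lemma (it only cites \cite[Lemma~5.3]{BBo} and remarks that it can be derived from Corollary~\ref{c:chara}), and your lifting construction $c\mapsto m_c=\min h^{-1}({\uparrow}c)$ uses exactly the ingredient the paper points to, namely that closed subsemilattices of a complete topologized semilattice have smallest elements. All the delicate points (that $h(m_c)=c$, that $h(M)$ is the \emph{least} upper bound via surjectivity, and that $h(\mu)\ge b$ via closedness of $h^{-1}({\uparrow}b)$) are handled correctly.
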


\section{Proof of Theorem~\ref{t:main} and Corollary~\ref{c:main}}\label{s:main}

The  proof of Theorem~\ref{t:main} is based on the following lemma.

\begin{lemma}\label{l1} Let $X$ be a complete subsemilattice of a semitopological semilattice $Y$. Let a pair $(x,y)\in Y\times Y$ belong to the closure of the natural partial order $\le_X$ of $X$ in $Y\times Y$, and let $\{U_n\}_{n\in\w}$, $\{V_n\}_{n\in\w}$ be sequences of closed neighborhoods of the points $x$ and $y$ in $Y$, respectively. Then there exist points $x'\in X\cap\bigcap_{n\in\w}U_n$ and $y'\in X\cap\bigcap_{n\in\w}V_n$ such that $x'\le y'$.
\end{lemma}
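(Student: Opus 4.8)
The plan is to construct a decreasing sequence $\big((a_n,b_n)\big)_{n\in\w}$ in the subsemilattice $\le_X$ of the (semitopological) semilattice $Y\times Y$ with $a_n\in U_n$ and $b_n\in V_n$ for all $n$, and then to set $x':=\inf_na_n$ and $y':=\inf_nb_n$. Replacing $U_n$ by $U_0\cap\dots\cap U_n$ and $V_n$ by $V_0\cap\dots\cap V_n$ (still closed neighborhoods of $x$ and $y$), we may assume the sequences $(U_n)$ and $(V_n)$ decreasing.

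Such a chain yields the lemma: by Corollary~\ref{c:chara} the complete semilattice $X$ is down-complete, so the decreasing chains $(a_n)$ and $(b_n)$ in $X$ have infima $x'=\inf_na_n\in X$ and $y'=\inf_nb_n\in X$. For each $m$, the set $\{a_n:n\ge m\}$ is a chain in $X$ with infimum $x'$, hence $x'\in\overline{\{a_n:n\ge m\}}$; since $a_n\in U_n\subseteq U_m$ for $n\ge m$ and $U_m$ is closed, $x'\in U_m$, so $x'\in\bigcap_mU_m$; similarly $y'\in\bigcap_mV_m$. Finally $x'\le a_n\le b_n$ for all $n$, so $x'$ is a lower bound of $\{b_n:n\in\w\}$ in $X$ and therefore $x'\le y'$.

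To construct the chain I would, in addition to $(a_n,b_n)\in\le_X$, $a_n\in U_n$, $b_n\in V_n$ and $(a_{n+1},b_{n+1})\le(a_n,b_n)$, carry along the invariant $(x,y)\le(a_n,b_n)$ in $Y\times Y$, i.e.\ $a_nx=x$ and $b_ny=y$. Put $$K:=\{(a,b)\in\le_X:ax=x\ \text{and}\ by=y\}=\le_X\cap\big({\uparrow}_Yx\times{\uparrow}_Yy\big),$$ which is a subsemilattice of $Y\times Y$, being the intersection of the subsemilattices $\le_X$, ${\uparrow}_Yx\times Y$ and $Y\times{\uparrow}_Yy$. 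Granting that $(x,y)\in\overline K$, the recursion runs as follows. For $n=0$ choose $(a_0,b_0)\in K\cap(U_0\times V_0)$; this is possible since $U_0\times V_0$ is a neighborhood of $(x,y)\in\overline K$. If $(a_n,b_n)\in K$ with $a_n\in U_n$, $b_n\in V_n$ is given, note that the shift $\phi_n\colon Y\times Y\to Y\times Y$, $\phi_n(s,t)=(a_ns,b_nt)=(a_n,b_n)\cdot(s,t)$, is continuous and satisfies $\phi_n(x,y)=(a_nx,b_ny)=(x,y)$; hence $\phi_n^{-1}(U_{n+1}\times V_{n+1})$ is a neighborhood of $(x,y)$, so there is $(s,t)\in K$ with $(a_ns,b_nt)\in U_{n+1}\times V_{n+1}$. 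Set $(a_{n+1},b_{n+1}):=(a_n,b_n)\cdot(s,t)$. Then $(a_{n+1},b_{n+1})\le(a_n,b_n)$; as a product of two members of the subsemilattice $K$ it again lies in $K$ (so the invariant persists) and hence in $\le_X$; and $a_{n+1}=a_ns\in U_{n+1}$, $b_{n+1}=b_nt\in V_{n+1}$.

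Thus everything reduces to the claim $(x,y)\in\overline K$, which is the heart of the argument and the place where completeness of $X$ is essential. My plan for it: fix a net $\big((a_\gamma,b_\gamma)\big)_{\gamma\in D}$ in $\le_X$ converging to $(x,y)$, and for each $\gamma$ form (using down-completeness of $X$) the infima $\underline a_\gamma,\underline b_\gamma\in X$ of all finite products of the $a_\delta$, resp.\ $b_\delta$, with $\delta\ge\gamma$; these lie below $a_\delta$, resp.\ $b_\delta$, for all $\delta\ge\gamma$, hence below $x$, resp.\ $y$, and they increase with $\gamma$, so (by up-completeness of $X$) $\sup_\gamma\underline a_\gamma$ and $\sup_\gamma\underline b_\gamma$ exist in $X$; running this construction inside a prescribed neighborhood of $(x,y)$ should produce members of $\le_X$ that dominate $(x,y)$ and lie in that neighborhood. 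I expect this last extraction to be the main obstacle: since $Y$ is merely semitopological, the partial order need not be closed and increasing nets need not converge to their suprema, so one must control everything through the completeness of $X$ rather than through continuity of the operation.
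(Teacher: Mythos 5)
Your reduction is sound as far as it goes: granted a decreasing sequence $(a_n,b_n)$ in $\le_X$ with $a_n\in U_n$ and $b_n\in V_n$, the passage to $x'=\inf_na_n$ and $y'=\inf_nb_n$ via down\nobreakdash-completeness, and the conclusion $x'\le y'$, is essentially how the paper finishes. The genuine gap is the claim $(x,y)\in\overline{K}$ with $K=\le_X\cap\,({\uparrow}_Yx\times{\uparrow}_Yy)$, which you correctly single out as the heart of the argument but do not prove, and which I do not believe can be reached by the route you sketch. From a net $(a_\gamma,b_\gamma)\to(x,y)$ in $\le_X$ your plan manufactures points $\underline a_\gamma\in X$ that are lower bounds of tails of the net; even granting enough separation on $Y$ to conclude $\underline a_\gamma\le x$ (the lemma imposes no separation axiom on $Y$), these points lie \emph{below} $x$, whereas membership in $K$ requires points of $X$ with $ax=x$, i.e.\ lying \emph{above} $x$. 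Nothing in the hypotheses guarantees that $X$ meets ${\uparrow}_Yx$ at all, let alone inside every neighborhood of $x$, and the single point $\sup_\gamma\underline a_\gamma\le x$ cannot supply members of $K$ in a prescribed neighborhood of $(x,y)$. So the proposal is incomplete precisely at the step that carries all the difficulty.

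The paper's proof shows how to sidestep this obstacle entirely: instead of demanding $a_nx=x$ (so that the shift $z\mapsto a_nz$ fixes $x$ and pulls neighborhoods back to neighborhoods), it carries the weaker inductive invariant that both $x_i\cdots x_n$ and $x_i\cdots x_nx$ lie in the \emph{open} set $U_i^\circ$ for all $i\le n$. Since $x_i\cdots x_{n-1}x$ is a point of $U_i^\circ$ fixed by right multiplication by $x$, separate continuity of $z\mapsto x_i\cdots x_{n-1}z$ and of $z\mapsto x_i\cdots x_{n-1}zx$ yields a neighborhood $U_n'$ of $x$ with $x_i\cdots x_{n-1}\cdot(U_n'\cup U_n'x)\subset U_i^\circ$, and any pair of $\le_X$ in $U_n'\times V_n'$ --- which exists simply because $(x,y)\in\overline{\le_X}$, with no passage to $K$ --- propagates the invariant. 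In other words, the role you wanted $x$ itself to play is played by the products $x_i\cdots x_{n-1}x$, which are controlled by the induction rather than by an order relation with $x$. You should either adopt this device or supply an independent proof that $(x,y)\in\overline K$; without one the argument does not close.
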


\begin{proof} Replacing each set $U_n$ by $\bigcap_{i\le n}U_i$, we can assume that $U_{n+1}\subset U_n$ for all $n\in\w$. By the same reason, we can assume that the sequence $(V_n)_{n\in\w}$ is decreasing. For every $n\in\w$ denote by $U_n^\circ$ and $V_n^\circ$ the interiors of the sets $U_n$ and $V_n$ in $Y$.

By induction we shall construct sequences $(x_n)_{n\in\w}$ and $(y_n)_{n\in\w}$ of points of $X$ such that for every $n\in\w$ the following conditions are satisfied:
\begin{enumerate}
\item[$(1_n)$] $x_n\le y_n$;
\item[$(2_n)$] $\{x_i\cdots x_n,x_i\cdots x_nx\}\subset U_i^\circ$ for all $i\le n$;
\item[$(3_n)$] $\{y_i\cdots y_n,y_i\cdots y_ny\}\subset V_i^\circ$ for all $i\le n$.
\end{enumerate}

To choose the initial points $x_0,y_0$, use the separate continuity of the semilattice operation and find neighborhoods $U'_0\subset U_0^\circ$ and $V'_0\subset V_0^\circ$ of $x$ and $y$ in $Y$ such that $U'_0x\subset U_0^\circ$ and $V'_0y\subset V_0^\circ$. By our assumption, there are points $x_0\in X\cap U'_0$ and $y_0\in X\cap V'_0$ such that  $x_0\le y_0$. The choice of the neighborhoods $U'_0$ and $V'_0$ ensures that the conditions $(2_0)$ and $(3_0)$ are satisfied.

Now assume that for some $n\in\IN$ points $x_0,\dots,x_{n-1}$ and $y_0,\dots,y_{n-1}$ of $X$ are chosen so that the conditions $(1_{n-1})$--$(3_{n-1})$ are satisfied. The condition $(2_{n-1})$ implies that for every $i\le n$ we have the inclusion $x_i\cdots x_{n-1}xx=x_i\cdots x_{n-1}x\in U_i^\circ$ (if $i=n$, then we understand that $x_i\cdots x_{n-1}x=x$).
Using the continuity of the shift $s_x:Y\to Y$, $s_x:z\mapsto xz$, we can  find a neighborhood $U_{n}'\subset Y$ of $x$ such that $x_i\cdots x_{n-1}\cdot(U_n'\cup U_n'x)\subset U_i^\circ$ for every $i\le n$. By analogy, we can find a neighborhood $V_n'\subset Y$ of $y$ such that   $y_i\cdots y_{n-1}\cdot(V_n'\cup V_n'y)\subset V_i^\circ$ for every $i\le n$.
By our assumption, there are points $x_n\in X\cap U_n'$ and $y_n\in X\cap V_n'$ such that $x_n\le y_n$. The choice of the neighborhoods $U'_0$ and $V'_0$ ensures that the conditions $(2_n)$ and $(3_n)$ are satisfied.
This completes the inductive step.
\smallskip

Now for every $i\in\w$ consider the chain $C_i=\{x_i\cdots x_n:n\ge i\}\subset U_i^\circ$ in $X$.  By the completeness of $X$, this chain has $\inf C_i\in X\cap\overline{C_i}\subset X\cap\overline{U_i^\circ}\subset X\cap U_i$. Observing that $\inf C_i\le x_ix_{i+1}\cdots x_n\le x_{i+1}\cdots x_n$ for all $i>n$, we see that $\inf C_i$ is a lower bound of the chain $C_{i+1}$ and hence $\inf C_i\le\inf C_{i+1}$. By the completeness of $X$, for every $i\in\w$ the chain $D_i:=\{\inf C_j:j\ge i\}\subset U_i$ has $\sup D_i\in X\cap\overline{D_i}\subset X\cap  U_i$. Since the sequence $(\inf C_i)_{i\in\w}$ is increasing, we get $\sup D_0=\sup D_i\in X\cap U_i$ for all $i\in\w$. Consequently, $\sup D_0\in X\cap\bigcap_{i\in\w}U_i$.

By analogy, for every $k\in\w$ consider the chain $E_i=\{y_i\cdots y_n:n\ge i\}\subset V_i^\circ$ in $X$.  By the completeness of $X$, this chain has $\inf E_i\in X\cap\overline{E_i}\subset X\cap\overline{V_i^\circ}\subset X\cap V_i$. By the completeness of $X$, for every $i\in\w$ the chain $F_i:=\{\inf E_j:j\ge i\}\subset V_i$ has $\sup F_i\in X\cap\overline{F_i}\subset X\cap \overline{V_i}=X\cap V_i$. Since the sequence $(\inf E_i)_{i\in\w}$ is increasing, we get $\sup F_0=\sup F_i\in X\cap V_i$ for all $i\in\w$. Consequently, $\sup F_0\in X\cap\bigcap_{i\in\w}V_i$.

To finish the proof of Lemma~\ref{l1}, it suffices to show that $\sup D_0\le \sup F_0$. The inductive conditions $(1_n)$, $n\in\w$, imply that $\inf C_i\le \inf E_i$ for all $i\in\w$ and $\sup D_0=\sup\{\inf C_i:i\in\w\}\le \sup\{\inf E_i:i\in\w\}=\sup F_0$.
\end{proof}


The following two lemmas imply Theorem~\ref{t:main}.

\begin{lemma}\label{l2} Let $Y$ be an $\w$-Lawson semitopological semilattice. For any complete subsemilattice $X\subset Y$ the natural partial order $\le_X$ of $X$ is closed in $Y\times Y$.
\end{lemma}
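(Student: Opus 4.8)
The plan is to show that every pair $(x,y)$ lying in the closure of $\le_X$ in $Y\times Y$ actually belongs to $\le_X$, i.e. that $x,y\in X$ and $xy=x$. Fix such a pair. I would call a subsemilattice $S$ of $Y$ \emph{admissible at $x$} if $S=\bigcap\U$ for some \emph{countable} family $\U$ of closed neighborhoods of $x$ in $Y$ (so $x\in S$). Let $\mathcal S$ be the collection of all subsemilattices admissible at $x$. Then $\mathcal S$ is closed under finite intersections (a union of two countable families is countable, and an intersection of subsemilattices is a subsemilattice), and the $\w$-Lawson property of $Y$ gives, for every $z\in Y\setminus\{x\}$, some $S\in\mathcal S$ with $z\notin S$; hence $\bigcap\mathcal S=\{x\}$. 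For $S\in\mathcal S$, say $S=\bigcap_{n\in\w}U_n$, the set $X\cap S$ is closed in $X$, is a subsemilattice of $X$, and is non-empty: applying Lemma~\ref{l1} to the sequence $(U_n)_{n\in\w}$ and to the constant sequence $V_n:=Y$ produces a point of $X\cap\bigcap_nU_n=X\cap S$. Since $X$ is complete, this closed subsemilattice of $X$ has a least element $m_S\in X$. Define $\mathcal T$, and for $T\in\mathcal T$ the least element of $X\cap T$, analogously for the point $y$.

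The key step will be the claim that $m_S\le y$ \emph{for every} $S\in\mathcal S$ (and, as a by-product, $y\in X$). To prove it, fix $S=\bigcap_nU_n\in\mathcal S$. Since $Y$ is a Hausdorff (indeed $T_1$) semitopological semilattice, the upper set ${\uparrow}m_S=\{z\in Y:zm_S=m_S\}$ is a closed subsemilattice of $Y$. For each $T\in\mathcal T$, written as $T=\bigcap_nV_n$, Lemma~\ref{l1} applied to $(U_n)$ and $(V_n)$ yields $a\in X\cap S$ and $b\in X\cap T$ with $a\le b$; as $m_S\le a\le b$ we get $b\in{\uparrow}m_S$, so $X\cap T\cap{\uparrow}m_S$ is a non-empty closed subsemilattice of the complete semilattice $X$ and hence has a least element $\mu_T\in X$. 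If $T'\subseteq T$ in $\mathcal T$ then $\mu_{T'}\in X\cap T\cap{\uparrow}m_S$, so $\mu_{T'}\ge\mu_T$; thus $\{\mu_T:T\in\mathcal T\}$ is an up-directed subset of $X$, and by up-completeness of $X$ (Corollary~\ref{c:chara}, Proposition~\ref{Iwamura}) it has a supremum $\mu_\infty\in X$ lying in the closure of $\{\mu_T\}$. For any fixed $T_0\in\mathcal T$ the sets $T\in\mathcal T$ with $T\subseteq T_0$ are cofinal in $\mathcal T$, so $\mu_\infty$ lies in the closure in $X$ of $\{\mu_T:T\subseteq T_0\}\subseteq X\cap T_0\cap{\uparrow}m_S$, a set closed in $X$; hence $\mu_\infty\in X\cap T_0\cap{\uparrow}m_S$. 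As $T_0$ was arbitrary, $\mu_\infty\in X\cap\bigcap\mathcal T\cap{\uparrow}m_S=X\cap\{y\}\cap{\uparrow}m_S$, which forces $\mu_\infty=y\in X$ and $y\in{\uparrow}m_S$, i.e. $m_S\le y$.

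With this claim established, the conclusion follows quickly. The family $\{m_S:S\in\mathcal S\}$ is up-directed ($m_{S\cap S'}\ge m_S,m_{S'}$), so by up-completeness of $X$ its supremum $x_\infty$ exists in $X$ and lies in the closure of $\{m_S\}$. For any $S_0\in\mathcal S$ the points $m_{S\cap S_0}$ ($S\in\mathcal S$) form an up-directed, cofinal subfamily with the same supremum $x_\infty$, all contained in the closed set $X\cap S_0$, whence $x_\infty\in S_0$; since $\bigcap\mathcal S=\{x\}$, this gives $x_\infty=x$, so $x\in X$. Finally, by the claim $y$ is an upper bound of $\{m_S:S\in\mathcal S\}$, so $x=x_\infty=\sup_{S}m_S\le y$, i.e. $xy=x$. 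Therefore $(x,y)\in\le_X$, and $\le_X$ is closed in $Y\times Y$.

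The step I expect to be the main obstacle is the claim $m_S\le y$. The difficulty is that $\w$-Lawson neighborhoods around $x$ and around $y$ are only subsemilattices, not singletons, so Lemma~\ref{l1} cannot directly compare $x$ with $y$; the idea is to replace the (possibly large) sets $X\cap S$ and $X\cap T$ by their least elements, and then to run the ``supremum of an up-directed family lands in the intersection of the family'' argument a second time, now inside the auxiliary closed subsemilattices $X\cap T\cap{\uparrow}m_S$, in order to upgrade the scattered relations $m_S\le b$ (for $b\in X\cap T$, $b$ near $y$) into the single relation $m_S\le y$. Recognising that completeness of $X$ must be invoked twice — once around $x$, once around $y$ — is the crux.
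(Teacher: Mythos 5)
Your proof is correct and follows essentially the same route as the paper's: your $m_S$, $\mu_T$, $\mu_\infty$, $x_\infty$ are exactly the paper's $x(\U_1)$, $y(\U_1,\U_2)$, $y(\U_1)$, $x$, and the two-level use of down-completeness (least elements of $X\cap S$ and of ${\uparrow}m_S\cap X\cap T$) followed by up-completeness and the cofinality argument is precisely the paper's scheme. The only cosmetic difference is that you identify the suprema with $y$ and $x$ directly via $\bigcap\mathcal T=\{y\}$ and $\bigcap\mathcal S=\{x\}$, whereas the paper runs the same $\w$-Lawson separation as a proof by contradiction.
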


\begin{proof} By Corollary~\ref{c:chara}, the complete semitopological semilattice $X$ is both up-complete and down-complete. To show that the partial order $\le_X:=\{(x,y)\in X\times X:x\le y\}$ is closed in $Y\times Y$, take any pair $(y_1,y_2)$ in the closure of the set $\le_X$ in $Y\times Y$. For every $i\in\{1,2\}$, let $\mathfrak U_i$ be the set of all countable families $\U$ of closed neighborhoods of $y_i$ in $Y$ such that $\bigcap\U$ is a subsemilattice of $Y$. By Lemma~\ref{l1}, for any  $\U_1\in\mathfrak U_1$ and $\U_2\in\mathfrak U_2$ there are points $x_1\in X\cap\bigcap\U_1$ and $x_2\in X\cap\bigcap\U_2$ such that $x_1\le x_2$. In particular, the closed subsemilattice $X\cap\bigcap\U_1$ is not empty and has the smallest element $\inf (X\cap\bigcap\U_1)\in X$ (by the down-completeness of $X$). Denote this smallest element by $x(\U_1)$. It follows that $x(\U_1):=\inf(X\cap\bigcap\U_1)\le x_1\le x_2$. Consequently, the closed subsemilattice  $({\uparrow}x(\U_1))\cap (X\cap\bigcap\U_2)\ni x_2$ is not empty and has the smallest element (by down-completeness of $X$), which will be denoted by $y(\U_1,\U_2)$. Observe that $x(\U_1)\in X\cap\bigcap\U_1$, $y(\U_1,\U_2)\in X\cap\bigcap\U_2$ and $x(\U_1)\le y(\U_1,\U_2)$. For any families $\U_1\in\mathfrak U_1$ and $\U_2,\U_2'\in \mathfrak U_2$ with $\U_2\subseteq \U_2'$ we have $y(\U_1,\U_2)\le y(\U_1,\U_2')$. Therefore, the set $\{y(\U_1,\U_2):\U_2\in\mathfrak U_2\}\subset X$ is up-directed  and by the up-completeness of $X$, it has the smallest upper bound in $X$, which will be denoted by $y(\U_1)$. It follows that $x(\U_1)\le y(\U_1)$. We claim that $y(\U_1)=y_2$. In the opposite case we can use the $\w$-Lawson property of $Y$ and choose a countable family $\U_2'\in\mathfrak U_2$ such that $y(\U_1)\notin \bigcap\U_2'$. Taking into account that the  set $\{y(\U_1,\U_2\cap\U_2'):\U_2\in\mathfrak U_2\}$ is cofinal in $\{y(\U_1,\U_2):\U_2\in\mathfrak U_2\}$, we conclude that $$y(\U_1)=\sup\{y(\U_1,\U_2):\U_2\in\mathfrak U_2\}=\sup\{y(\U_1,\U_2\cap\U_2'):\U_2\in\mathfrak U_2\}\in {\textstyle\bigcap}\U_2',$$
which contradicts the choice of the family $\U'_2$. This contradiction shows that $y_2=y(\U_1)\in X$. Now we see that $x(\U_1)\le y(\U_1)=y_2$ for every $\U_1\in\mathfrak U_1$. By the up-completeness of the semitopological semilattice $X$, the up-directed subset $\{x(\U_1):\U_1\in\mathfrak U_1\}$ has the smallest upper bound $x\in X$. It follows from $x(\U_1)\le y_2$ for all $\U_1\in\mathfrak U_1$ that $x\le y_2$.

It remains to check that $x=y_1$. In the opposite case, using the $\w$-Lawson property of $X$, we can find a countable family $\U_1'\in\mathfrak U_1$ such that $x\notin \bigcap\U_1'$. Taking into account that the family $\{x(\U_1\cap\U_1'):\U_1\in\mathfrak U_1\}$ is cofinal in $\{x(\U_1):\U_1\in\mathfrak U_1\}$, we conclude that $$x=\sup\{x(\U_1):\U_1\in\mathfrak U_1\}=\sup\{x(\U_1\cap\U_1'):\U_1\in\mathfrak U_1\}\in {\textstyle\bigcap}\U_1',$$
which contradicts the choice of $\U_1'$. This contradiction shows that $y_1=x\in X$. Therefore we obtain that $(y_1,y_2)\in X\times X$ and $y_1=x\le y_2$, which means that $(y_1,y_2)\in\;\le_X$.
\end{proof}

\begin{lemma}\label{l3} Each complete subsemilattice $X$ of  an $\w$-Lawson semitopological semilattice $Y$ is closed in $Y$.
\end{lemma}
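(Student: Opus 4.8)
The plan is to deduce closedness of $X$ in $Y$ from the already-established closedness of the partial order $\le_X$ in $Y\times Y$ (Lemma~\ref{l2}). The key observation is that for a complete subsemilattice, membership in $X$ can be detected by a ``sandwiching'' condition expressible through $\le_X$. Concretely, suppose $y\in\overline{X}$ (closure taken in $Y$); I want to show $y\in X$. First I would note that the diagonal pair $(y,y)$ lies in the closure of $\le_X$ in $Y\times Y$: indeed, every basic neighborhood $W\times W$ of $(y,y)$ meets $X\times X$ in a point $(a,b)$ with $a,b\in W\cap X$, and then $ab\in W\cap X$ (using that $W$ may be taken to be a closed neighborhood whose intersection with $X$ we do not yet control — so more care is needed here, see below) with $ab\le a$, giving a point $(ab,a)\in\;\le_X\;\cap\,(W\times W)$. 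Hence by Lemma~\ref{l2}, $(y,y)\in\;\le_X$, which forces $(y,y)\in X\times X$, i.e. $y\in X$.

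The gap in the naive argument above is that an arbitrary open neighborhood $W$ of $y$ need not be a subsemilattice, so $a,b\in W\cap X$ does not give $ab\in W$. The fix is to work with the two-variable version directly: given an open neighborhood $W$ of $y$ in $Y$, pick a point $x_W\in W\cap X$ (possible since $y\in\overline{X}$), and observe that $(x_W,x_W)\in\;\le_X$ and $(x_W,x_W)\in W\times W$. Thus $(y,y)$ is a limit of points of $\le_X$, so $(y,y)\in\overline{\le_X}=\;\le_X$ by Lemma~\ref{l2}, and therefore $y\in X$. This is in fact completely routine once Lemma~\ref{l2} is in hand, since $x\le_X x$ holds for every $x\in X$, so the diagonal of $X$ is contained in $\le_X$ and hence $\overline{X}$, embedded diagonally, is contained in $\overline{\le_X}=\;\le_X\subset X\times X$.

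So the only genuine content is the passage $(y,y)\in\;\le_X\Rightarrow y\in X$, which is immediate from Lemma~\ref{l2} stating $\le_X\subset X\times X$ is closed in $Y\times Y$; and the observation that the diagonal embedding $d:Y\to Y\times Y$, $d(y)=(y,y)$, is continuous, so $d^{-1}(\le_X)$ is closed in $Y$ and contains the dense-in-$\overline X$ set $X$. There is no real obstacle: the work was all done in Lemma~\ref{l2}. One should only be slightly careful to state that $X=d^{-1}(\le_X)$ as sets (the inclusion $X\subseteq d^{-1}(\le_X)$ is reflexivity of $\le_X$; the reverse inclusion $d^{-1}(\le_X)\subseteq X$ is because $\le_X\subset X\times X$), whence $X$, being the preimage of a closed set under a continuous map, is closed in $Y$.
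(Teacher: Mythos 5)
Your proof is correct and follows essentially the same route as the paper: both deduce the claim from Lemma~\ref{l2} by writing $X$ as the preimage of the closed set $\le_X$ under a continuous map into $Y\times Y$. The only (harmless) difference is the choice of map: the paper uses $y\mapsto(\min X,y)$, which requires first extracting the smallest element of $X$ from completeness via Corollary~\ref{c:chara}, whereas your diagonal map $y\mapsto(y,y)$ works directly, using only reflexivity of $\le_X$ and the inclusion $\le_X\subseteq X\times X$ --- a slight simplification.
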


\begin{proof} By Lemma~\ref{l2}, the partial order $\le_X:=\{(x,y)\in X\times X:xy=x\}$ is a closed subset of $Y\times Y$. By Corollary~\ref{c:chara}, the complete semilattice $X$ has the smallest element $\min X\in X$. Consider the continuous map $f:Y\to Y\times Y$, $f:y\mapsto (\min X,y)$, and observe that $X=f^{-1}(\le_X)$ is a closed subset of $X$, being the preimage of the closed set $\le_X$ under the continuous map $f$.
\end{proof}

Finally, we prove Corollary~\ref{c:main}.

\begin{lemma}\label{l4} For every continuous homomorphism $h:X\to Y$ from a complete topologized semilattice $X$ to an $\w$-Lawson semitopological semilattice $Y$, the image $h(X)$ is closed in $Y$.
\end{lemma}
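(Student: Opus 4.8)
The plan is to reduce Lemma~\ref{l4} to Lemma~\ref{l3} by passing to the closure of the image. First I would let $h:X\to Y$ be a continuous homomorphism from a complete topologized semilattice $X$ to an $\w$-Lawson semitopological semilattice $Y$, and set $Z:=\overline{h(X)}$, the closure of $h(X)$ in $Y$. Since $Y$ is a semitopological semilattice, $Z$ is a subsemilattice of $Y$ (the closure of a subsemilattice in a semitopological semilattice is again a subsemilattice, because each shift is continuous). Being a subspace of the $\w$-Lawson semilattice $Y$, the semilattice $Z$ is itself an $\w$-Lawson semitopological semilattice: the defining families of closed neighborhoods for $Y$ intersect $Z$ to give the required families in $Z$, and the Hausdorff and semitopological properties are hereditary.

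Next I would observe that $Y$, being $\w$-Lawson, is Hausdorff, hence $T_1$, so it is ${\uparrow}$-closed (as noted in the excerpt, each $T_1$ semitopological semilattice is ${\uparrow}$-closed); the same applies to $Z$. The corestricted map $h:X\to Z$ is a continuous \emph{surjective} homomorphism from the complete semilattice $X$ onto the ${\uparrow}$-closed semitopological semilattice $Z$, so by Lemma~\ref{l:hom} the semilattice $Z$ is complete. Moreover $Z$ is a subsemilattice of the $\w$-Lawson semitopological semilattice $Y$, and it is complete, so Lemma~\ref{l3} applies directly: $Z$ is closed in $Y$. But $Z=\overline{h(X)}$, so $h(X)\subset Z=\overline{h(X)}$, and we must still close the gap between $h(X)$ and its closure.

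To finish, I would argue that in fact $h(X)=Z$. Indeed, $Z$ is a complete subsemilattice of $Y$ and $h(X)$ is a dense subsemilattice of $Z$; applying Lemma~\ref{l2} to the pair $h(X)\subset Z$ — once we know $h(X)$ is complete, which follows again from Lemma~\ref{l:hom} since $h:X\to h(X)$ is a continuous surjective homomorphism onto the ${\uparrow}$-closed semilattice $h(X)$ — we get that $h(X)$ is closed in $Z$, hence in $Y$ (as $Z$ is closed in $Y$), and by density $h(X)=Z$. Thus $h(X)=Z=\overline{h(X)}$ is closed in $Y$.

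The main obstacle I expect is the bookkeeping around completeness: one must be careful that $h(X)$, not merely its closure, is complete before invoking Lemma~\ref{l3} or Lemma~\ref{l2}, and this requires noticing that $h(X)$ itself is ${\uparrow}$-closed (which is fine since $h(X)\subset Y$ is $T_1$) and then applying Lemma~\ref{l:hom} to the surjection $X\twoheadrightarrow h(X)$. A slightly slicker route that avoids discussing $Z$ at all is to apply Lemma~\ref{l3} directly to the complete subsemilattice $h(X)$ of $Y$ — this works once $h(X)$ is known to be complete — and conclude immediately that $h(X)$ is closed in $Y$; the verification that $h(X)$ is complete via Lemma~\ref{l:hom} is then the only nontrivial point, and it is routine. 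I would present this direct version.
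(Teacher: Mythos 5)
Your final ``direct version'' is exactly the paper's proof: note that $h(X)$, being a $T_1$ semitopological semilattice, is ${\uparrow}$-closed, apply Lemma~\ref{l:hom} to the surjection $X\twoheadrightarrow h(X)$ to get that $h(X)$ is complete, and then apply Lemma~\ref{l3} to the complete subsemilattice $h(X)$ of the $\w$-Lawson semilattice $Y$. That argument is correct, and since you say you would present it, the proof you submit is fine.

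However, the route through $Z:=\overline{h(X)}$ that occupies most of your write-up contains a genuine error: the corestriction $h:X\to Z$ is \emph{not} surjective --- its image is $h(X)$, which is a priori only dense in $Z$. So Lemma~\ref{l:hom} cannot be invoked to conclude that $Z$ is complete. This is not a cosmetic point: completeness is exactly the property that is not obviously inherited by closures, and if the closure of the image of a complete semilattice were automatically complete, the closedness problem the paper studies would largely trivialize. The subsequent step, where you apply Lemma~\ref{l2} to the pair $h(X)\subset Z$ and then appeal to density to get $h(X)=Z$, also silently presupposes what you are proving (and in any case becomes redundant once $h(X)$ itself is known to be complete). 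Drop the $Z$ detour entirely and keep only the direct version.
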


\begin{proof} Observe that the $\w$-Lawson property of $Y$ implies that the semitopological semilattice $Y$ is Hausdorff and hence ${\uparrow}$-closed. By Lemma~\ref{l:hom}, the semitopological semilattice $h(X)$ of $Y$ is complete and by Lemma~\ref{l3}, $h(X)$ is closed in $Y$.
\end{proof}
\newpage


\begin{thebibliography}{}

\bibitem{BBm} T.~Banakh, S.~Bardyla,
\emph{Characterizing chain-finite and chain-compact topological semilattices}, Semigroup Forum  {\bf 98}:2 (2019), 234--250.

\bibitem{BBw} T.~Banakh, S.~Bardyla, {\em The interplay between weak topologies on topological semilattices}, Topology Appl. {\bf 259} (2019), 134--154.

\bibitem{BBc} T.~Banakh, S.~Bardyla, {\em
Completeness and absolute $H$-closedness of topological
semilattices}, Topology Appl. {\bf 260} (2019) 189--202.

\bibitem{BBseq} T.~Banakh, S.~Bardyla, {\em On images of complete subsemilattices in sequential  semitopological  semilattices}, Semigroup Forum (https://doi.org/10.1007/s00233-019-10061-w).

\bibitem{BBo} T.~Banakh, S.~Bardyla, {\em Complete topologized posets and semilattices}, preprint\newline (https://arxiv.org/abs/1806.02869).

\bibitem{BBR} T.~Banakh, S.~Bardyla, A.~Ravsky, {\em  The closedness of complete subsemilattices in functionally Hausdorff semitopological semilattices}, Topology Appl. {\bf 267} (2019) 106874. 

\bibitem{BBR2} T.~Banakh, S.~Bardyla, A.~Ravsky, {\em A metrizable semitopological semilattice with non-closed partial order}, preprint (https://arxiv.org/abs/1902.08760).


\bibitem{BBR3} T.~Banakh, S.~Bardyla, A.~Ravsky, {\em A metrizable Lawson  semitopological semilattice with non-closed partial order}, preprint (https://arxiv.org/abs/1909.04298).












\bibitem{Bruns} G.~Bruns, {\em A lemma on directed sets and chains}, Arch. der Math. {\bf 18} (1967), 561--563.

\bibitem{CHK}
J.H.~Carruth, J.A.~Hildebrant,  R.J.~Koch, \emph{The Theory of
Topological Semigroups},
Vol. II, Marcel Dekker, Inc., New York and Basel, 1986.



%
\bibitem{Engelking1989}
R.~Engelking, \emph{General Topology}, Heldermann,
Berlin, 1989.
















\bibitem{GutikRepovs2008}
O.~Gutik, D.~Repov\v{s}, {\em On linearly ordered $H$-closed
topological semilattices}, Semigroup Forum \textbf{77}:3 (2008),
474--481.


\bibitem{Iwamura}  T. Iwamura, {\em A lemma on directed sets}, Zenkoku Shijo Sugaku Danwakai {\bf 262} (1944), 107--111 (in Japanese).


\bibitem{Lawson} J.D.~Lawson, {\em Topological semilattices with small semilattices}, J. London Math., \textbf{2} (1969), 719--724.

\bibitem{Law74} J.D.~Lawson, {\em Joint continuity in semitopological semigroups}, Illinois J. Math. \textbf{18}:2 (1974), 275--285.









\bibitem{Mark} G.~Markowsky, {\em Chain-complete posets and directed sets with applications}, Algebra Universalis, {\bf 6} (1976) 53-68.


\bibitem{Stepp1975}
J.W.~Stepp, {\it Algebraic maximal semilattices}. Pacific J. Math.
{\bf 58}:1  (1975), 243---248.



\end{thebibliography}
\end{document}